\documentclass[pdflatex,sn-mathphys-num]{sn-jnl}% Math and Physical Sciences Numbered Reference Style
\usepackage{graphicx}%
\usepackage{multirow}%
\usepackage{amsmath,amssymb,amsfonts}%
\usepackage{amsthm}%
\usepackage{mathrsfs}%
\usepackage[title]{appendix}%
\usepackage{xcolor}%
\usepackage{textcomp}%
\usepackage{manyfoot}%
\usepackage{booktabs}%
\usepackage{algorithm}%
\usepackage{algorithmicx}%
\usepackage{algpseudocode}%
\usepackage{listings}%
\theoremstyle{thmstyleone}%
\theoremstyle{thmstyletwo}%

\theoremstyle{thmstylethree}%
\newtheorem{assumption}{Assumption}[section]%
\newtheorem{theorem}{Theorem}[section]%
\newtheorem{lemma}[theorem]{Lemma}%
\newtheorem{definition}[theorem]{Definition}%
\newtheorem{proposition}[theorem]{Proposition}% 
\newtheorem{remark}{Remark}[section]%

\begin{document}

\title[Article Title]{Kernel-Based Heterogeneous Mean Field Games on a Continuum Type Space}

%%=============================================================%%
%% GivenName	-> \fnm{Joergen W.}
%% Particle	-> \spfx{van der} -> surname prefix
%% FamilyName	-> \sur{Ploeg}
%% Suffix	-> \sfx{IV}
%% \author*[1,2]{\fnm{Joergen W.} \spfx{van der} \sur{Ploeg} 
%%  \sfx{IV}}\email{iauthor@gmail.com}
%%=============================================================%%

\author[1]{\fnm{Guangchen} \sur{Wang}}\email{wguangchen@sdu.edu.cn}

\author*[1]{\fnm{Luebo} \sur{Xia}}\email{202414833@mail.sdu.edu.cn}

\affil*[1]{\orgdiv{School of Control Science and Engineering}, \orgname{Shandong University}, \orgaddress{\street{Jingshi Rd.}, \city{Jinan}, \postcode{250061},  \country{PR China}}}

%%==================================%%
%% Sample for unstructured abstract %%
%%==================================%%

\abstract{This paper studies a class of heterogeneous mean field games on a continuum type space, in which interactions are generated by a Hilbert-Schmidt kernel operator acting on the type-indexed mean state profile. The control set is allowed to be a general closed convex subset of a Euclidean space, and the optimal feedback is characterized through the unique maximizer of the Hamiltonian. For a prescribed mean-field path, the stochastic maximum principle leads to a family of fully coupled forward-backward stochastic differential equations (FBSDEs) indexed by the agent type. Under Lipschitz conditions and a monotonicity assumption on the reduced coefficients, the frozen FBSDE is globally well posed. We then formulate the consistency problem as a fixed-point problem for the kernel mean-field path. When the operator norm of the interaction kernel is sufficiently small, the resulting map has an invariant ball and is continuous and compact, so that Schauder's theorem yields existence of a mean field Nash equilibrium. Uniqueness follows under an aggregate monotonicity condition adapted to the kernel coupling. A linear-quadratic example illustrates how finite-rank interactions and heterogeneous risk preferences produce coupled collective modes, and yields a spectral smallness condition for existence and an explicit sufficient condition for uniqueness.}

\keywords{Mean Field Games, Heterogeneous Agents, Continuum Type Space, Kernel Interaction, Forward-Backward Stochastic Differential Equations, Nash Equilibrium}

%%\pacs[JEL Classification]{D8, H51}

%%\pacs[MSC Classification]{35A01, 65L10, 65L12, 65L20, 65L70}

\maketitle

\section{Introduction}\label{sec1}

Mean field game (MFG) theory provides a tractable framework for the analysis of stochastic differential games with a large population of weakly interacting agents. In the limiting model, the influence of an individual player on the population becomes negligible, while the aggregate behavior of the population enters the optimization problem of each representative player. Since the seminal works \cite{bib2,bib3}, the theory has developed along both analytic and probabilistic directions; see, for example, \cite{bib1,bib4,bib4_1}. In the classical formulation, agents are statistically homogeneous and interact through a common population distribution.

A substantial part of the recent literature has been devoted to models in which this homogeneity is relaxed. Heterogeneity may arise through different coefficients, constraints, labels, subpopulations, or network positions; see, for example, \cite{bib5,bib6,cainesHuang2021graphon,aurellCarmonaLauriere2022,lackerSoret2023}. Graphon and kernel-based models provide a particularly natural framework for describing heterogeneous interactions indexed by a continuum of agent types. In such models, the influence exerted by agents of type $\theta'$ on agents of type $\theta$ is weighted by a kernel depending on the pair $(\theta,\theta')$. Graphon mean field games and their associated equations were developed in \cite{cainesHuang2021graphon}, while stochastic graphon games with a continuum of players were investigated, in particular, in \cite{aurellCarmonaLauriere2022}. A different label-state formulation, which embeds the player label into the state and permits considerably more general graphon interactions, was proposed in \cite{lackerSoret2023}.

The present paper studies a probabilistic kernel-based MFG on a general Polish probability type space $(\Theta,\mathcal B(\Theta),\lambda)$. For a type-indexed measure flow
$
\mu=\{\mu_t^\theta\}_{t,\theta},
$
we consider the interaction
$$
m_t^K[\mu](\theta) = \int_\Theta K(\theta,\theta') \left( \int_{\mathbb R^k}x\,\mu_t^{\theta'}(\mathrm dx) \right) \lambda(\mathrm d\theta').
$$
Thus the environment faced by type $\theta$ is obtained by applying an integral operator to the first-moment profile of the population. The kernel is allowed to be matrix-valued and need not be symmetric. Our main structural assumption is that it belongs to $L^2(\Theta\times\Theta)$, so that the associated integral operator on $L^2(\Theta;\mathbb R^k)$ is Hilbert-Schmidt and hence compact.

Our formulation differs from the linear-quadratic continuum-player models in that the controlled dynamics and reward functions may be nonlinear. The control set $U$ is an arbitrary nonempty closed convex subset of a Euclidean space. We assume that the Hamiltonian admits a unique maximizer $\hat u(t,x,p,m,\theta)$, with linear growth, and formulate the main regularity and monotonicity assumptions directly in terms of the resulting reduced coefficients. This makes it possible to separate the two roles played by the stochastic control problem: first, the stochastic maximum principle identifies the candidate optimal feedback and leads to a reduced forward-backward stochastic differential equation (FBSDE); second, suitable concavity inherited from the reduced monotonicity conditions verifies that this feedback is indeed optimal.

For a prescribed measure flow, the kernel mean field becomes an exogenous deterministic path. The reduced optimality system is then a family of fully coupled FBSDEs indexed by $\theta$. We impose a sign-adjusted Peng-Wu monotonicity condition on the reduced forward and backward coefficients and on the terminal condition. The classical continuation method for monotone FBSDEs \cite{pengwu1999} then gives global well-posedness of the frozen system. In addition to providing the single-valued response needed below, the same structural condition implies strong concavity of the maximized Hamiltonian in the state variable and strong concavity of the terminal reward, which permits a direct verification of optimality.

The consistency problem has two equivalent descriptions. In the sense of state distributions, the frozen FBSDE defines the map $\Phi(\mu)_t^\theta = \mathcal L(X_t^{\mu,\theta})$. For the existence proof, the fixed-point argument is carried out via the kernel mean-field path. Given a prescribed path $m\in C([0,T];L^2(\Theta;\mathbb R^k))$, we solve the corresponding frozen FBSDE and apply the kernel operator to the resulting mean state profile. The stability estimate for monotone FBSDEs gives continuity of this response, while compactness follows from the Hilbert-Schmidt property of the kernel operator together with a uniform time-modulus estimate. If the operator norm $\kappa_K$ is sufficiently small relative to the a priori constants of the frozen FBSDE, an invariant closed ball can be constructed. Schauder's fixed-point theorem then yields a self-consistent mean-field path, from which a fixed point of $\Phi$ and hence a mean field Nash equilibrium are recovered.

The smallness assumption used in the existence argument should be distinguished from a contraction condition. We do not require $\Phi$ to be a contraction. In fact, the stability estimate only gives $d(\Phi(\mu),\Phi(\nu)) \le C_\Phi\kappa_K d(\mu,\nu)$, while the Schauder argument merely requires enough smallness to preserve an invariant ball. This distinction also becomes important for uniqueness. Rather than imposing the condition $C_\Phi\kappa_K<1$, we introduce an aggregate monotonicity condition adapted to the kernel coupling. It compares two complete type-indexed state-adjoint profiles after their corresponding kernel mean fields have been generated endogenously. An energy identity for the difference of two equilibrium FBSDEs then yields uniqueness of both the equilibrium measure flow and the associated optimal control.

The main contributions of the paper may therefore be summarized as follows:
\begin{itemize}

\item
We formulate a nonlinear heterogeneous MFG on a continuum type space with a matrix-valued Hilbert-Schmidt interaction kernel and a general closed convex control set. The stochastic maximum principle reduces each frozen individual optimization problem to a type-indexed fully coupled FBSDE. Under a Peng-Wu type monotonicity condition, this system is globally well posed, and its solution generates the unique optimal response.

\item
We establish existence of a mean field Nash equilibrium through a Schauder fixed-point argument on the Banach space of kernel mean-field paths. The proof combines a priori estimates for monotone FBSDEs, stability with respect to the prescribed mean-field path, compactness of the kernel operator, and an invariant-ball estimate. The required weak-interaction condition is expressed in terms of the operator norm $\kappa_K$, rather than a pointwise bound on the kernel.

\item
We prove uniqueness under an aggregate monotonicity condition which incorporates the kernel coupling directly. This condition is distinct from the smallness condition used for existence and can yield uniqueness outside the contraction regime. A readily verifiable sufficient condition is obtained by comparing the intrinsic monotonicity constants of the frozen FBSDE with the effective interaction strength $\kappa_K$.

\item
We study an explicit linear-quadratic model and derive concrete existence and uniqueness conditions in terms of the model matrices and $\kappa_K$. For finite-rank kernels, the directly interacting component is generated by finitely many kernel modes, while heterogeneous preferences induce additional coupling between these modes through a multiplication operator on the type space. The corresponding eigenvalues quantify both the strength of the interaction modes and the conditions ensuring well-posedness.

\end{itemize}

The remainder of the paper is organized as follows. Section \ref{sec2} introduces the functional setting, the kernel-based heterogeneous mean field game, the structural assumptions, and the reduced FBSDE obtained from the stochastic maximum principle. Section \ref{sec3} establishes well-posedness and stability of the frozen FBSDE, verifies optimality of the resulting feedback controls, proves equilibrium existence by a Schauder fixed-point argument, and establishes equilibrium uniqueness under aggregate kernel monotonicity. Section \ref{sec4} develops a linear-quadratic example and analyzes finite-rank kernels through their spectral decomposition. Section \ref{sec5} concludes with several directions for further research.

\section{Preliminaries}\label{sec2}

\subsection{Notation and Functional Setting}
\label{subsec2.1}

Let $(\Omega,\mathcal F,\mathbb F,\mathbb P)$ be a filtered probability space satisfying the usual conditions, where $\mathbb F=\{\mathcal F_t\}_{t\in[0,T]}$. We assume that it supports a $k$-dimensional Brownian motion $W$ and that $\mathcal F_t = \mathcal F_0\vee\sigma(W_s:0\le s\le t)$ up to the usual augmentation, where $W$ is independent of $\mathcal F_0$.

The type space is denoted by a Polish probability space $(\Theta,\mathcal B(\Theta),\lambda)$. The element $\theta\in\Theta$ represents the type of an individual agent, while $\lambda$ describes the distribution of types. For measurable functions $h:\Theta\to\mathbb R^k$, we write
$$
\|h\|_{L^2_\lambda}^2 = \int_\Theta |h(\theta)|^2 \lambda(\mathrm d\theta).
$$

For $p\ge1$, let $\mathcal P_p(\mathbb R^k)$ denote the set of Borel probability measures on $\mathbb R^k$ with finite $p$-th moment. The $2$-Wasserstein distance on $\mathcal P_2(\mathbb R^k)$ is defined by
$$
\mathcal W_2(\mu,\nu) = \inf_{\pi\in\Pi(\mu,\nu)} \left( \int_{\mathbb R^k\times\mathbb R^k} |x-y|^2 \pi(\mathrm dx,\mathrm dy) \right)^{1/2},
$$
where $\Pi(\mu,\nu)$ denotes the set of all couplings between $\mu$ and $\nu$.

We shall use measurable type-indexed families of probability measures. Define
$$
\mathscr P_{2,\lambda} := \left\{ \boldsymbol\mu=\{\mu^\theta\}_{\theta\in\Theta}:
\begin{array}{l}
\theta\mapsto\mu^\theta \text{ is measurable},
\\[1mm]
\displaystyle
\int_\Theta \int_{\mathbb R^k} |x|^2\mu^\theta(\mathrm dx) \lambda(\mathrm d\theta)<\infty
\end{array}
\right\},
$$
where families equal $\lambda$-a.e. are identified. We equip this space with
$$
\mathbf W_{2,\lambda} (\boldsymbol\mu,\boldsymbol\nu) = \left( \int_\Theta \mathcal W_2(\mu^\theta,\nu^\theta)^2 \lambda(\mathrm d\theta) \right)^{1/2}.
$$

The space of admissible measure flows is $\mathcal M = C([0,T];\mathscr P_{2,\lambda})$, where continuity is understood with respect to $\mathbf W_{2,\lambda}$. For $\mu,\nu\in\mathcal M$, define
$$
d(\mu,\nu) = \sup_{t\in[0,T]} \left( \int_\Theta \mathcal W_2 (\mu_t^\theta,\nu_t^\theta)^2 \lambda(\mathrm d\theta) \right)^{1/2}.
$$

The kernel interaction is generated by a measurable matrix-valued kernel $K\in L^2(\Theta\times\Theta;\mathbb R^{k\times k})$. The associated integral operator
$$
\mathcal K:
L^2(\Theta;\mathbb R^k)
\to
L^2(\Theta;\mathbb R^k)
$$
is defined by
$$
(\mathcal Kh)(\theta)
=
\int_\Theta
K(\theta,\theta')
h(\theta')
\lambda(\mathrm d\theta').
$$
We denote its operator norm by
$$
\kappa_K
:=
\big\|\mathcal K\big\|_{\mathcal L(L^2(\Theta;\mathbb R^k))}.
$$
Since $K$ is square-integrable, $\mathcal K$ is Hilbert-Schmidt and hence compact.

For matrices, $\|\cdot\|$ denotes the operator norm, while $\|\cdot\|_{\mathrm F}$ denotes the Frobenius norm.

For a jointly measurable type-indexed family of continuous adapted processes $X = \{X^\theta\}_{\theta\in\Theta}$, define
$$
\|X\|_{\mathbb S_\Theta^2}^2
=
\int_\Theta
\mathbb E
\left[
\sup_{t\in[0,T]}
|X_t^\theta|^2
\right]
\lambda(\mathrm d\theta).
$$
We write $X\in\mathbb S_\Theta^2$ whenever this quantity is finite.

For a jointly measurable type-indexed family of progressively measurable processes $Z = \{Z^\theta\}_{\theta\in\Theta}$, define
$$
\|Z\|_{\mathbb H_\Theta^2}^2
=
\int_\Theta
\mathbb E
\left[
\int_0^T
|Z_t^\theta|^2\mathrm dt
\right]
\lambda(\mathrm d\theta),
$$
and write $Z\in\mathbb H_\Theta^2$ whenever this quantity is finite.

For a jointly measurable type-indexed square-integrable random variable $X=\{X^\theta\}_{\theta\in\Theta}$, define
$$
\mathfrak m[X](\theta)
:=
\mathcal K
\bigl(
\theta'\mapsto
\mathbb E[X^{\theta'}]
\bigr)(\theta).
$$

The control space is
$$
\mathcal U
=
\left\{
u:[0,T]\times\Omega\to U:
\begin{array}{l}
u\text{ is }\mathbb F\text{-progressively measurable},
\\[1mm]
\displaystyle
\mathbb E\int_0^T|u_t|^2\mathrm dt<\infty
\end{array}
\right\},
$$
where $U\subset\mathbb R^m$ is a nonempty closed convex set.

Throughout the paper, generic positive constants are denoted by $C$, possibly with subscripts. Their values may change from line to line.

\subsection{Kernel-based Heterogeneous Mean Field Game}
\label{subsec2.2}

For a measure flow $\mu\in\mathcal M$, define its first-moment profile by
\begin{equation}
\label{1st_moment_profile}
a_t^\mu(\theta)
:=
\int_{\mathbb R^k}
x\,\mu_t^\theta(\mathrm dx),
\end{equation}
and define the corresponding kernel mean field by
$$
m_t^K[\mu](\theta)
:=
\int_\Theta
K(\theta,\theta')
a_t^\mu(\theta')
\lambda(\mathrm d\theta').
$$
Equivalently,
$$
m_t^K[\mu]
=
\mathcal K a_t^\mu.
$$

For a prescribed measure flow $\mu\in\mathcal M$ and a fixed type $\theta$, the process
$$
m^K[\mu](\theta)
=
\{m_t^K[\mu](\theta)\}_{t\in[0,T]}
$$
is deterministic and is treated as an exogenous environment in the individual stochastic control problem.

For a representative agent of type $\theta$ and a control
$u^\theta\in\mathcal U$, the state process satisfies
$$
\mathrm dX_t^\theta
=
b\bigl(
t,X_t^\theta,u_t^\theta,
m_t^K[\mu](\theta),\theta
\bigr)\mathrm dt
+
\sigma\,\mathrm dW_t,
\qquad
X_0^\theta=\xi^\theta,
$$
where
$$
b:
[0,T]\times\mathbb R^k\times U\times\mathbb R^k\times\Theta
\longrightarrow
\mathbb R^k
$$
and $\sigma\in\mathbb R^{k\times k}$ is a constant nondegenerate matrix.

The reward functional is
$$
J^\theta(u^\theta;m^K[\mu])
:=
\mathbb E
\left[
\int_0^T
f\bigl(
t,X_t^\theta,u_t^\theta,
m_t^K[\mu](\theta),\theta
\bigr)
\mathrm dt
+
g\bigl(
X_T^\theta,m_T^K[\mu](\theta),\theta
\bigr)
\right],
$$
where
$$
f:
[0,T]\times\mathbb R^k\times U\times\mathbb R^k\times\Theta
\longrightarrow
\mathbb R
$$
and
$$
g:
\mathbb R^k\times\mathbb R^k\times\Theta
\longrightarrow
\mathbb R.
$$
For
$$
(t,x,u,p,m,\theta)
\in
[0,T]\times\mathbb R^k\times U
\times\mathbb R^k\times\mathbb R^k\times\Theta,
$$
define the Hamiltonian
$$
H(t,x,u,p;\theta,m)
:=
\langle b(t,x,u,m,\theta),p\rangle
+
f(t,x,u,m,\theta).
$$
The maximizer of the Hamiltonian
$$
\hat u(t,x,p,m,\theta)
:=
\arg\max_{u\in U}
H(t,x,u,p;\theta,m)
$$
is well-defined and unique under strict concavity and coercivity conditions. (See Assumption \ref{asmp004}(a) and (b) and \cite{bonnansShapiro2000} for details.)
For later use, define the maximized Hamiltonian by
$$
\widehat H(t,x,p;\theta,m)
:=
H\bigl(
t,x,\hat u(t,x,p,m,\theta),p;\theta,m
\bigr).
$$
We also introduce the reduced coefficients
$$
B(t,x,p,m,\theta)
:=
b\bigl(
t,x,\hat u(t,x,p,m,\theta),m,\theta
\bigr),
$$
$$
F(t,x,p,m,\theta)
:=
\partial_xH
\bigl(
t,x,\hat u(t,x,p,m,\theta),p;\theta,m
\bigr),
$$
and
$$
\Gamma(x,m,\theta)
:=
\partial_xg(x,m,\theta).
$$

It is useful to state explicitly the equilibrium notion used below.

\begin{definition}
\label{def:MFG_equilibrium}
A mean field Nash equilibrium is a pair
$$
\left(
\{u^{\ast,\theta}\}_{\theta\in\Theta},
\mu^\ast
\right),
\qquad
\mu^\ast\in\mathcal M,
$$
such that, for $\lambda$-a.e. $\theta\in\Theta$:

\begin{enumerate}
\item[(i)]
$u^{\ast,\theta}\in\mathcal U$ is optimal against the environment generated by $\mu^\ast$, that is,
$$
J^\theta
(u^{\ast,\theta};m^K[\mu^\ast])
=
\sup_{u^\theta\in\mathcal U}
J^\theta
(u^\theta;m^K[\mu^\ast]);
$$

\item[(ii)]
if $X^{\ast,\theta}$ denotes the state process corresponding to
$u^{\ast,\theta}$ and $m^K[\mu^\ast]$, then
$$
\mu_t^{\ast,\theta}
=
\mathcal L(X_t^{\ast,\theta}),
\qquad
t\in[0,T].
$$
\end{enumerate}
\end{definition}

We now collect the standing assumptions.

\begin{assumption}
\label{asmp001}
The family
$$
(\theta,\omega)
\longmapsto
\xi^\theta(\omega)
$$
is jointly measurable, $\xi^\theta$ is $\mathcal F_0$-measurable for $\lambda$-a.e. $\theta$, and
$$
\int_\Theta
\mathbb E|\xi^\theta|^2
\lambda(\mathrm d\theta)
<\infty.
$$
\end{assumption}

\begin{assumption}
\label{asmp002}
The primitive coefficients satisfy the following conditions.

\item[(a)]
The functions $b$ and $f$ are jointly measurable in $(t,\theta)$ and continuous in $(x,u,m)$, while $g$ is measurable in $\theta$ and continuous in $(x,m)$.

\item[(b)]
There exists $L_b>0$, independent of $(t,\theta)$, such that
$$
|b(t,x,u,m,\theta)
-
b(t,x',u,m,\theta)|
\le
L_b|x-x'|,
$$
and there exists $C>0$ such that
$$
|b(t,x,u,m,\theta)|
\le
C(1+|x|+|u|+|m|).
$$

\item[(c)]
The functions $b$ and $f$ are continuously differentiable with respect to $x$, and $g$ is continuously differentiable with respect to $x$. Their derivatives are continuous in the remaining Euclidean variables and satisfy
$$
|\partial_xb(t,x,u,m,\theta)|
\le C,
$$
$$
|\partial_xf(t,x,u,m,\theta)|
\le
C(1+|x|+|u|+|m|),
$$
and
$$
|\partial_xg(x,m,\theta)|
\le
C(1+|x|+|m|).
$$

\item[(d)]
The rewards have at most quadratic growth:
$$
|f(t,x,u,m,\theta)|
+
|g(x,m,\theta)|
\le
C
\bigl(
1+|x|^2+|u|^2+|m|^2
\bigr).
$$
\end{assumption}

\begin{assumption}
\label{asmp003}
Write
$$
\kappa_K
:=
\|\mathcal K\|_{\mathcal L(L^2(\Theta;\mathbb R^k))}.
$$
We assume
\begin{equation}
\label{eq:kernel_smallness}
\kappa_K<\bar\kappa,
\end{equation}
where the quantitative threshold $\bar\kappa>0$ is chosen in Section \ref{sec3} so that the invariant-ball condition for the consistency map is satisfied.
\end{assumption}

\begin{assumption}
\label{asmp004}
The Hamiltonian satisfies the following conditions.

\item[(a)]
For every $(t,x,p,m,\theta)$, the mapping
$$
u
\longmapsto
H(t,x,u,p;\theta,m)
$$
is continuous and strictly concave on $U$.

\item[(b)]
For every fixed $(t,x,p,m,\theta)$,
$$
H(t,x,u,p;\theta,m)
\longrightarrow
-\infty
\qquad
\text{as }|u|\to\infty,\quad u\in U.
$$

\item[(c)]
The resulting unique maximizer
$$
\hat u(t,x,p,m,\theta)
:=
\arg\max_{u\in U}
H(t,x,u,p;\theta,m)
$$
is jointly measurable in all its variables.
\end{assumption}

\begin{assumption}
\label{asmp005}
The maximizer and maximized Hamiltonian satisfy:

\item[(a)]
There exists $C_u>0$, independent of $(t,\theta)$, such that
$$
|\hat u(t,x,p,m,\theta)|
\le
C_u
\bigl(
1+|x|+|p|+|m|
\bigr).
$$

\item[(b)]
For every $(t,m,\theta)$, the mapping
$$
(x,p)
\longmapsto
\widehat H(t,x,p;\theta,m)
$$
is continuously differentiable and satisfies the envelope identities
$$
\partial_x\widehat H(t,x,p;\theta,m)
=
\partial_xH
\bigl(
t,x,\hat u(t,x,p,m,\theta),p;\theta,m
\bigr)
$$
and
$$
\partial_p\widehat H(t,x,p;\theta,m)
=
b\bigl(
t,x,\hat u(t,x,p,m,\theta),m,\theta
\bigr).
$$
\end{assumption}

\begin{assumption}
\label{asmp006}
The reduced coefficients satisfy the following conditions.

\item[(a)]
The functions $B$ and $F$ are jointly measurable in $(t,\theta)$, and $\Gamma$ is measurable in $\theta$. There exist constants
$$
L_{xp},L_m,L_\Gamma,L_{\Gamma,m},C_0>0
$$
such that
$$
\begin{aligned}
&
|B(t,x,p,m,\theta)-B(t,x',p',m',\theta)|
\\
&\quad+
|F(t,x,p,m,\theta)-F(t,x',p',m',\theta)|
\\
&\le
L_{xp}
\bigl(
|x-x'|+|p-p'|
\bigr)
+
L_m|m-m'|,
\end{aligned}
$$
and
$$
|\Gamma(x,m,\theta)-\Gamma(x',m',\theta)|
\le
L_\Gamma|x-x'|
+
L_{\Gamma,m}|m-m'|.
$$
Moreover,
$$
\sup_{(t,\theta)}
\Bigl(
|B(t,0,0,0,\theta)|
+
|F(t,0,0,0,\theta)|
\Bigr)
+
\sup_\theta
|\Gamma(0,0,\theta)|
\le C_0.
$$

\item[(b)]
There exist constants $\beta,\nu>0$, independent of $(t,m,\theta)$, such that for all $x,x',p,p'\in\mathbb R^k$,
\begin{equation}
\label{eq:PW_monotonicity}
\begin{aligned}
&
\left\langle
B(t,x,p,m,\theta)-B(t,x',p',m,\theta),
p-p'
\right\rangle
\\
&\quad-
\left\langle
F(t,x,p,m,\theta)-F(t,x',p',m,\theta),
x-x'
\right\rangle
\\
&\ge
\beta|x-x'|^2,
\end{aligned}
\end{equation}
and
\begin{equation}
\label{eq:PW_terminal}
\left\langle
\Gamma(x,m,\theta)-\Gamma(x',m,\theta),
x-x'
\right\rangle
\le
-\nu|x-x'|^2.
\end{equation}
\end{assumption}

The following assumption will only be needed for uniqueness of the self-consistent equilibrium.

\begin{assumption}
\label{asmp007}
There exist constants $c_{\mathrm M}>0$ and $c_{\mathrm T}>0$ such that:

\item[(a)]
For every $t\in[0,T]$ and all jointly measurable square-integrable families
$$
X=\{X^\theta\}_{\theta\in\Theta},
\qquad
P=\{P^\theta\}_{\theta\in\Theta},
$$
and
$$
\widetilde X=\{\widetilde X^\theta\}_{\theta\in\Theta},
\qquad
\widetilde P=\{\widetilde P^\theta\}_{\theta\in\Theta},
$$
one has
\begin{equation}
\label{eq:kernel_monotonicity}
\begin{aligned}
&
\int_\Theta
\mathbb E
\Big[
\big\langle
B(t,X^\theta,P^\theta,\mathfrak m[X](\theta),\theta)
\\
&\qquad\qquad
-
B(t,\widetilde X^\theta,\widetilde P^\theta,
\mathfrak m[\widetilde X](\theta),\theta),
P^\theta-\widetilde P^\theta
\big\rangle
\\
&\qquad
-
\big\langle
F(t,X^\theta,P^\theta,\mathfrak m[X](\theta),\theta)
\\
&\qquad\qquad
-
F(t,\widetilde X^\theta,\widetilde P^\theta,
\mathfrak m[\widetilde X](\theta),\theta),
X^\theta-\widetilde X^\theta
\big\rangle
\Big]
\lambda(\mathrm d\theta)
\\
&\ge
c_{\mathrm M}
\int_\Theta
\mathbb E
|X^\theta-\widetilde X^\theta|^2
\lambda(\mathrm d\theta).
\end{aligned}
\end{equation}

\item[(b)]
For every pair of jointly measurable square-integrable families
$X$ and $\widetilde X$,
\begin{equation}
\label{eq:kernel_terminal_monotonicity}
\begin{aligned}
&
\int_\Theta
\mathbb E
\Big[
\big\langle
\Gamma(X^\theta,\mathfrak m[X](\theta),\theta)
\\
&\qquad\qquad
-
\Gamma(\widetilde X^\theta,
\mathfrak m[\widetilde X](\theta),\theta),
X^\theta-\widetilde X^\theta
\big\rangle
\Big]
\lambda(\mathrm d\theta)
\\
&\le
-c_{\mathrm T}
\int_\Theta
\mathbb E
|X^\theta-\widetilde X^\theta|^2
\lambda(\mathrm d\theta).
\end{aligned}
\end{equation}
\end{assumption}

\begin{remark}
Since $K\in L^2(\Theta\times\Theta)$, the operator $\mathcal K$ is Hilbert-Schmidt and therefore compact. In particular,
$$
\kappa_K
\le
\|K\|_{L^2(\Theta\times\Theta)}.
$$
Hence the directly verifiable condition
$$
\|K\|_{L^2(\Theta\times\Theta)}
<
\bar\kappa
$$
is sufficient for \eqref{eq:kernel_smallness}. No symmetry assumption on $K$ is imposed in the general theory. If $\mathcal K$ is self-adjoint, then compactness gives
$$
\kappa_K
=
\sup_j|\kappa_j|,
$$
where $\{\kappa_j\}$ are its eigenvalues.
\end{remark}

\begin{remark}
\label{rem:reduced_concavity}
Assumption \ref{asmp006}(b) is a sign-adjusted version of the monotonicity condition used in the continuation method for fully coupled FBSDEs introduced by Peng and Wu \cite{pengwu1999}. In the present model the forward diffusion coefficient is constant and the reduced coefficients are independent of the martingale integrand, so the relevant condition reduces to \eqref{eq:PW_monotonicity}--\eqref{eq:PW_terminal}.

The same assumption also provides the concavity needed in the verification argument. Indeed, by the envelope identity in Assumption \ref{asmp005}, taking $p'=p$ in \eqref{eq:PW_monotonicity} gives
$$
\left\langle
\partial_x\widehat H(t,x,p;\theta,m)
-
\partial_x\widehat H(t,x',p;\theta,m),
x-x'
\right\rangle
\le
-\beta|x-x'|^2.
$$
Hence
$$
\begin{aligned}
\widehat H(t,y,p;\theta,m)
\le{}&
\widehat H(t,x,p;\theta,m)
\\
&+
\left\langle
\partial_x\widehat H(t,x,p;\theta,m),
y-x
\right\rangle
-
\frac{\beta}{2}|y-x|^2.
\end{aligned}
$$
Similarly, since $\Gamma=\partial_xg$, condition \eqref{eq:PW_terminal} gives
$$
g(y,m,\theta)
\le
g(x,m,\theta)
+
\langle
\Gamma(x,m,\theta),y-x
\rangle
-
\frac{\nu}{2}|y-x|^2.
$$
Thus both
$
\widehat H(t,\cdot,p;\theta,m)
$
and
$
g(\cdot,m,\theta)
$
are strongly concave in the state variable.
\end{remark}

\begin{remark}
\label{rem:kernel_monotonicity_verification}
Assumption \ref{asmp007} is an aggregate monotonicity condition adapted to the endogenous kernel interaction. It is imposed only on mean fields generated by state profiles and is therefore different from requiring a pointwise monotonicity inequality for arbitrary external values of $m$.

A convenient sufficient condition can be obtained from Assumption \ref{asmp006}. Suppose, in addition, that there exists $\beta_p>0$ such that
$$
\begin{aligned}
&
\left\langle
B(t,x,p,m,\theta)-B(t,x',p',m,\theta),
p-p'
\right\rangle
\\
&\quad-
\left\langle
F(t,x,p,m,\theta)-F(t,x',p',m,\theta),
x-x'
\right\rangle
\\
&\ge
\beta|x-x'|^2
+
\beta_p|p-p'|^2.
\end{aligned}
$$
Since
$$
\|\mathfrak m[X]-\mathfrak m[\widetilde X]\|_{L^2_\lambda}
\le
\kappa_K
\left(
\int_\Theta
\mathbb E
|X^\theta-\widetilde X^\theta|^2
\lambda(\mathrm d\theta)
\right)^{1/2},
$$
the Lipschitz bounds in Assumption \ref{asmp006}(a) imply
\eqref{eq:kernel_monotonicity} whenever
$$
\beta
-
L_m\kappa_K
-
\frac{L_m^2\kappa_K^2}{4\beta_p}
>0.
$$
Likewise,
\eqref{eq:kernel_terminal_monotonicity} follows from
$$
\nu
-
L_{\Gamma,m}\kappa_K
>0.
$$
Thus the uniqueness condition can be verified by comparing the intrinsic monotonicity of the frozen state-adjoint system with the effective strength of the kernel interaction.
\end{remark}

\subsection{Stochastic Maximum Principle and the Reduced FBSDE}
\label{subsec2.3}

Fix a measure flow $\mu\in\mathcal M$ and a type $\theta\in\Theta$, and write
$$
m_t^{\mu,\theta}
:=
m_t^K[\mu](\theta).
$$
The process $m^{\mu,\theta}$ is deterministic and is regarded as an exogenous input to the individual control problem
$$
\sup_{u^\theta\in\mathcal U}
J^\theta(u^\theta;m^K[\mu]),
$$
subject to
$$
\mathrm dX_t^\theta
=
b(t,X_t^\theta,u_t^\theta,m_t^{\mu,\theta},\theta)\mathrm dt
+
\sigma\mathrm dW_t.
$$

Suppose that the above control problem admits an optimal pair $(X^\theta,u^\theta)$. Under the standing regularity assumptions, the stochastic maximum principle yields an adjoint pair $(p^\theta,q^\theta)$ satisfying
$$
\mathrm dp_t^\theta
=
-\partial_xH
(t,X_t^\theta,u_t^\theta,p_t^\theta;
\theta,m_t^{\mu,\theta})
\mathrm dt
+
q_t^\theta\mathrm dW_t,
$$
with terminal condition
$$
p_T^\theta
=
\partial_xg
(X_T^\theta,m_T^{\mu,\theta},\theta).
$$
Moreover, the maximum condition gives
$$
u_t^\theta
\in
\arg\max_{u\in U}
H(t,X_t^\theta,u,p_t^\theta;
\theta,m_t^{\mu,\theta}),
\qquad
\mathrm dt\otimes\mathrm d\mathbb P\text{-a.e.}
$$
By Assumption \ref{asmp004}, the maximizer is uniquely defined. Denote 
$$
\hat u(t,x,p,m,\theta)
:=
\arg\max_{u\in U}
H(t,x,u,p;\theta,m).
$$
Hence every optimal control necessarily satisfies
$$
u_t^\theta
=
\hat u
(t,X_t^\theta,p_t^\theta,m_t^{\mu,\theta},\theta).
$$

Consequently, it follows that every optimal pair, together with its adjoint processes, necessarily gives a solution of the reduced FBSDE
\begin{equation}
\left\{
\begin{aligned}
\mathrm dX_t^{\mu,\theta}
={}&
B(t,X_t^{\mu,\theta},p_t^{\mu,\theta},
m_t^{\mu,\theta},\theta)\mathrm dt
+
\sigma\mathrm dW_t,
\\
\mathrm dp_t^{\mu,\theta}
={}&
-F(t,X_t^{\mu,\theta},p_t^{\mu,\theta},
m_t^{\mu,\theta},\theta)\mathrm dt
+
q_t^{\mu,\theta}\mathrm dW_t,
\\
X_0^{\mu,\theta}
={}&
\xi^\theta,
\\
p_T^{\mu,\theta}
={}&
\Gamma(X_T^{\mu,\theta},m_T^{\mu,\theta},\theta).
\end{aligned}
\right.
\label{eq:reduced_FBSDE}
\end{equation}

At this stage, the stochastic maximum principle is used only as a necessary condition: a solution of \eqref{eq:reduced_FBSDE} is not, by itself, asserted to generate an optimal control. In the next section, we utilize the monotonicity condition of Peng-Wu introduced in Assumption \ref{asmp006}, under which the frozen FBSDE \eqref{eq:reduced_FBSDE} admits a unique adapted solution for every prescribed measure flow $\mu$. This unique solution will be used to define the consistency mapping on the space of measure flows. Its connection with optimal responses, and hence with mean field Nash equilibria, will be established separately.

\section{Existence and Uniqueness of Mean Field Nash Equilibria}
\label{sec3}

In this section, we establish the existence and uniqueness of a mean field Nash equilibrium through the reduced FBSDE derived in Subsection~\ref{subsec2.3}. The argument separates the individual optimization problem under a prescribed mean field from the population-level consistency problem.

We first consider a prescribed deterministic mean-field path. Under the monotonicity condition established in Assumption~\ref{asmp006}, the corresponding reduced FBSDE is globally well posed. The same monotonicity structure, together with the envelope identities in Assumption~\ref{asmp005}, implies strong concavity of the maximized Hamiltonian in the state variable and hence provides a direct verification argument. Consequently, the unique solution of the frozen FBSDE generates the unique optimal response to the prescribed mean field. This allows us to define a single-valued consistency map $\Phi$ by assigning to each prescribed measure flow the distribution flow of the optimally controlled state.

We then establish stability of the frozen FBSDE with respect to the prescribed mean field and, as a consequence, continuity of $\Phi$. For the existence argument, we formulate an equivalent consistency map $\mathcal T$ on $C([0,T];L^2(\Theta;\mathbb R^k))$ by applying the kernel operator to the mean state profile generated by an arbitrary prescribed mean-field path. The Hilbert-Schmidt property of the kernel yields the spatial compactness needed for this map, while the FBSDE estimates provide a uniform modulus of continuity in time. Under the weak-interaction condition in Assumption~\ref{asmp003}, an invariant closed ball can be constructed, and Schauder's fixed-point theorem then gives a self-consistent mean-field path and hence a mean field Nash equilibrium.

Finally, uniqueness is treated separately from existence. We impose the aggregate monotonicity condition in Assumption~\ref{asmp007}. An energy estimate comparing two self-consistent state-adjoint systems then shows that their state laws, and subsequently their optimal controls, must coincide.

\subsection{Well-posedness of the Reduced FBSDE and the Consistency Map}
\label{subsec3.1}

We first establish the well-posedness of the reduced FBSDE
\eqref{eq:reduced_FBSDE} for a prescribed measure flow.

For $\mu\in\mathcal M$, the kernel-based mean field can be written as $ m_t^K[\mu] = \mathcal K a_t^\mu$, where $a_t^\mu$ is defined as in \eqref{1st_moment_profile}, and $\mathcal K$ is the integral operator introduced in Subsection \ref{subsec2.1}. By Jensen's inequality,
$$
\begin{aligned}
\|a_t^\mu\|_{L^2_\lambda}^2
& = \int_\Theta \left| \int_{\mathbb R^k} x\,\mu_t^\theta(\mathrm dx) \right|^2 \lambda(\mathrm d\theta)
\\
&\le \int_\Theta \int_{\mathbb R^k} |x|^2\mu_t^\theta(\mathrm dx) \lambda(\mathrm d\theta).
\end{aligned}
$$
Consequently,
\begin{equation}
\label{eq:mean_field_L2_bound}
\|m_t^K[\mu]\|_{L^2_\lambda}^2 \le \kappa_K^2 \int_\Theta \int_{\mathbb R^k} |x|^2\mu_t^\theta(\mathrm dx) \lambda(\mathrm d\theta).
\end{equation}
Moreover, for $s,t\in[0,T]$,
$$
\begin{aligned}
\|a_t^\mu-a_s^\mu\|_{L^2_\lambda}^2
&\le \int_\Theta \mathcal W_2 \bigl( \mu_t^\theta,\mu_s^\theta \bigr)^2 \lambda(\mathrm d\theta).
\end{aligned}
$$
Since $\mu\in\mathcal M$, it follows that $ a^\mu \in C\bigl([0,T];L^2(\Theta;\mathbb R^k)\bigr). $

The boundedness of $\mathcal K$ therefore yields
\begin{equation}
\label{eq:mean_field_path_space}
m^K[\mu] \in C\bigl([0,T];L^2(\Theta;\mathbb R^k)\bigr).
\end{equation}
In particular,
$$
\int_\Theta \left( |m_T^{\mu,\theta}|^2 + \int_0^T |m_t^{\mu,\theta}|^2 \mathrm dt \right) \lambda(\mathrm d\theta) <\infty,
$$
where, as before,
$$
m_t^{\mu,\theta} := m_t^K[\mu](\theta).
$$

We now apply the monotonicity method for fully coupled FBSDEs.

\begin{lemma}
\label{lem001}
Suppose that Assumptions \ref{asmp001}, \ref{asmp003}, and \ref{asmp006} hold. Then, for every $\mu\in\mathcal M$ and for $\lambda$-a.e. $\theta\in\Theta$, the reduced FBSDE \eqref{eq:reduced_FBSDE} admits a unique square-integrable adapted solution
$$
\left( X^{\mu,\theta}, p^{\mu,\theta}, q^{\mu,\theta} \right).
$$
The corresponding type-indexed family belongs to
$$
\mathbb S_\Theta^2 \times \mathbb S_\Theta^2 \times \mathbb H_\Theta^2.
$$
Moreover, there exists a constant $C_{\mathrm{F}}>0$ depending only on $T$, $\sigma$, and the constants in Assumption \ref{asmp006}, such that
\begin{equation}
\label{eq:reduced_FBSDE_estimate}
\begin{aligned}
& \int_\Theta \mathbb E \left[ \sup_{t\in[0,T]} |X_t^{\mu,\theta}|^2 + \sup_{t\in[0,T]} |p_t^{\mu,\theta}|^2 + \int_0^T |q_t^{\mu,\theta}|^2 \mathrm dt \right] \lambda(\mathrm d\theta)
\\
&\le C_{\mathrm F} \Bigg[ 1 + \int_\Theta \mathbb E|\xi^\theta|^2 \lambda(\mathrm d\theta) + \|m_T^K[\mu]\|_{L^2_\lambda}^2 + \int_0^T \|m_t^K[\mu]\|_{L^2_\lambda}^2 \mathrm dt \Bigg].
\end{aligned}
\end{equation}
\end{lemma}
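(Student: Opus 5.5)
We fix $\mu\in\mathcal M$ and a type $\theta$ outside a $\lambda$-null set on which $\xi^\theta\in L^2$ and $m^{\mu,\theta}$ is square-integrable in time. Since $m^{\mu,\theta}$ is deterministic, \eqref{eq:reduced_FBSDE} is a fully coupled FBSDE with coefficients $(t,x,p)\mapsto B(t,x,p,m_t^{\mu,\theta},\theta)$, $-F(\cdot)$ and $\Gamma(\cdot,m_T^{\mu,\theta},\theta)$. By Assumption \ref{asmp006}(a) these are uniformly Lipschitz in $(x,p)$. Their values at the origin are controlled by $C_0+L_m|m_t^{\mu,\theta}|$ and $C_0+L_{\Gamma,m}|m_T^{\mu,\theta}|$, which are square-integrable in $t$. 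The diffusion $\sigma$ is constant, so it is trivially monotone.

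After the sign change $p\mapsto -p$ (equivalently, reading \eqref{eq:PW_monotonicity}--\eqref{eq:PW_terminal} as the Peng--Wu condition with $G=1$ and $\beta_2=0$), the structure is exactly that of \cite{pengwu1999}. We run their continuation method. We consider the family interpolating between the decoupled system ($\alpha=0$, where $X$ solves a trivial SDE and $p$ a linear BSDE) and the target ($\alpha=1$). A fixed-point argument in $\mathbb S^2\times\mathbb S^2\times\mathbb H^2$ then extends solvability by a step $\delta_0$ depending only on $L_{xp},L_\Gamma,\beta,\nu,T$. Uniqueness comes from applying It\^o's formula to $\langle \hat X_t,\hat p_t\rangle$ for the difference of two solutions. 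Using \eqref{eq:PW_monotonicity} and \eqref{eq:PW_terminal}, this gives
$$\nu\,\mathbb E|\hat X_T|^2+\beta\,\mathbb E\int_0^T|\hat X_t|^2\mathrm dt\le 0,$$
so $\hat X=0$, and then $\hat p=\hat q=0$ follows from BSDE uniqueness. Joint measurability in $\theta$ follows because the solution is the limit of Picard/continuation iterates, each of which is jointly measurable in $(\theta,\omega,t)$, since the coefficients, $\xi^\theta$ and $m^{\mu,\theta}$ are.

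For the estimate \eqref{eq:reduced_FBSDE_estimate}, we compare the solution with the zero profile. We apply It\^o's formula to $\langle X_t,p_t\rangle$ and insert $x'=p'=0$ in \eqref{eq:PW_monotonicity}--\eqref{eq:PW_terminal}. This yields
$$\nu\mathbb E|X_T|^2+\beta\mathbb E\int_0^T|X_t|^2\mathrm dt\le \mathbb E|\xi^\theta||p_0|+\mathbb E\Bigl[|X_T|\bigl(C_0+L_{\Gamma,m}|m_T^{\mu,\theta}|\bigr)\Bigr]+\mathbb E\int_0^T\bigl(|X_t|+|p_t|\bigr)\bigl(C_0+L_m|m_t^{\mu,\theta}|\bigr)\mathrm dt.$$
Standard BSDE estimates bound $\mathbb E\sup_t|p_t|^2+\mathbb E\int_0^T|q_t|^2\mathrm dt$ by $C\bigl(\mathbb E|X_T|^2+\mathbb E\int_0^T|X_t|^2\mathrm dt+1+|m_T^{\mu,\theta}|^2+\int_0^T|m_t^{\mu,\theta}|^2\mathrm dt\bigr)$. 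Plugging this into the right-hand side with Young's inequality with small $\varepsilon$ absorbs the $X$ terms. Gronwall's inequality on the forward equation and the BDG inequality then give the $\sup$ bound for $X$, with a constant $C_{\mathrm F}$ independent of $\theta$. Integrating in $\lambda$ yields \eqref{eq:reduced_FBSDE_estimate}; the right-hand side is finite by \eqref{eq:mean_field_path_space} and Assumption \ref{asmp001}. Hence the family lies in $\mathbb S_\Theta^2\times\mathbb S_\Theta^2\times\mathbb H_\Theta^2$.

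The main obstacle is the absorption step. The term $\mathbb E|\xi^\theta||p_0|$ and the $|p_t|$ terms involve $p$, which is controlled only through $X$, and monotonicity only gives coercivity in $x$. I would first try to close the estimate by Young's inequality combined with the BSDE bound, with $\varepsilon$ chosen depending on $\beta,\nu,L_{xp},L_\Gamma,T$. If this fails to close, I would fall back on the quantitative a priori estimate of \cite{pengwu1999} for monotone systems, which already carries uniform constants and applies here since only the inhomogeneous terms depend on $\theta$.
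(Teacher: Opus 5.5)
Your proposal follows the paper's route: freeze the deterministic path $m^{\mu,\theta}$, check the Lipschitz, origin-growth and Peng--Wu monotonicity conditions for the frozen coefficients, invoke the continuation theorem of \cite{pengwu1999} with its $\theta$-uniform a priori estimate, and integrate over $\lambda$. The paper simply cites that estimate while you derive it, and your absorption step does close once $\varepsilon$ is chosen small in terms of $\beta,\nu,L_{xp},L_\Gamma,T$; but because you apply It\^o's formula to $\langle X_t,p_t\rangle$ itself rather than to a difference of two solutions, the covariation term $\mathbb E\int_0^T\operatorname{tr}(\sigma^\top q_t)\,\mathrm dt$ does not cancel and must be added to your right-hand side. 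It is harmless, being bounded by $\varepsilon\,\mathbb E\int_0^T|q_t|^2\,\mathrm dt+T\|\sigma\|_{\mathrm F}^2/(4\varepsilon)$ and absorbed through the BSDE bound exactly like the $p$-terms, and it contributes to the $\sigma$-dependence of $C_{\mathrm F}$.
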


\begin{proof}
Fix $\mu\in\mathcal M$. By \eqref{eq:mean_field_path_space}, for $\lambda$-a.e. $\theta\in\Theta$,
$$
|m_T^{\mu,\theta}|^2 + \int_0^T |m_t^{\mu,\theta}|^2 \mathrm dt <\infty.
$$
For such a $\theta$, define
$$
B_t^{\mu,\theta}(x,p) := B(t,x,p,m_t^{\mu,\theta},\theta),
$$
$$
F_t^{\mu,\theta}(x,p) := F(t,x,p,m_t^{\mu,\theta},\theta),
$$
and
$$
\Gamma^{\mu,\theta}(x) := \Gamma(x,m_T^{\mu,\theta},\theta).
$$

By Assumption \ref{asmp006}(a), $B^{\mu,\theta}$ and $F^{\mu,\theta}$ are uniformly Lipschitz in $(x,p)$ and $\Gamma^{\mu,\theta}$ is uniformly Lipschitz in $x$. Moreover,
$$
\begin{aligned}
& |B_t^{\mu,\theta}(0,0)| + |F_t^{\mu,\theta}(0,0)|
\\
&\le C\bigl(1+|m_t^{\mu,\theta}|\bigr),
\end{aligned}
$$
and
$$
|\Gamma^{\mu,\theta}(0)| \le C\bigl(1+|m_T^{\mu,\theta}|\bigr).
$$
Hence the coefficients at the origin satisfy the required square-integrability conditions.

Assumption \ref{asmp006}(b) further gives
$$
\begin{aligned}
& \left\langle B_t^{\mu,\theta}(x,p) - B_t^{\mu,\theta}(x',p'), p-p' \right\rangle
\\
&\quad - \left\langle F_t^{\mu,\theta}(x,p) - F_t^{\mu,\theta}(x',p'), x-x' \right\rangle
\\
&\ge \beta|x-x'|^2,
\end{aligned}
$$
and
$$
\left\langle \Gamma^{\mu,\theta}(x) - \Gamma^{\mu,\theta}(x'), x-x' \right\rangle \le -\nu|x-x'|^2.
$$
Thus the frozen coefficients satisfy the Peng-Wu monotonicity conditions in the present constant-diffusion setting. The continuation theorem for monotone fully coupled FBSDEs therefore yields a unique square-integrable adapted solution of \eqref{eq:reduced_FBSDE} for $\lambda$-a.e. $\theta$.

The corresponding a priori estimate gives
$$
\begin{aligned}
& \mathbb E \left[ \sup_{t\in[0,T]} |X_t^{\mu,\theta}|^2 + \sup_{t\in[0,T]} |p_t^{\mu,\theta}|^2 + \int_0^T |q_t^{\mu,\theta}|^2 \mathrm dt
\right]
\\
&\le C_{\mathrm F} \left( 1+\mathbb E|\xi^\theta|^2 + |m_T^{\mu,\theta}|^2 + \int_0^T |m_t^{\mu,\theta}|^2 \mathrm dt \right),
\end{aligned}
$$
where $C_{\mathrm F}$ is independent of $\theta$ and $\mu$. Integrating with respect to $\lambda(\mathrm d\theta)$ proves \eqref{eq:reduced_FBSDE_estimate}.
\end{proof}

\begin{remark}
\label{rem:prescribed_path_well-posedness}
The proof of Lemma \ref{lem001} uses the prescribed measure flow only through its kernel mean field. Hence the same argument applies to an arbitrary prescribed path $m\in C([0,T];L^2(\Theta;\mathbb R^k))$ and yields a unique solution of \eqref{eq:frozen_m_FBSDE}. 
\end{remark}

The parameterized solution may moreover be taken jointly measurable in the type variable. Indeed, in the continuation construction for the frozen FBSDE, the initial system has a jointly measurable solution by Assumptions~\ref{asmp001} and \ref{asmp006}(a). At each continuation step, the Picard map involves only the jointly measurable coefficients $B$, $F$, and $\Gamma$, together with conditional expectations and stochastic integrals, and therefore preserves joint measurability in $(\theta,t,\omega)$. Hence every iterate admits a jointly measurable version. Since the contraction and a priori estimates used in the continuation argument are uniform in $\theta$, the iterates converge in $\mathbb S_\Theta^2\times\mathbb S_\Theta^2\times\mathbb H_\Theta^2$. In particular, all three components converge in the corresponding product-space $L^2$ norms on $\Theta\times[0,T]\times\Omega$. Passing to a subsequence therefore gives $\lambda\otimes\mathrm dt\otimes\mathbb P$-a.e. convergence. As pointwise limits of jointly measurable functions are jointly measurable, the limiting solution admits a jointly measurable version. By uniqueness, this version is the parameterized solution $(X^{\mu,\theta},p^{\mu,\theta},q^{\mu,\theta})$ obtained above.

We may therefore define, for every $\mu\in\mathcal M$,
\begin{equation}
\label{eq:Phi}
\Phi(\mu) := \left\{ \Phi(\mu)_t^\theta \right\}_{t\in[0,T],\,\theta\in\Theta}, 
\qquad
 \Phi(\mu)_t^\theta := \mathcal L(X_t^{\mu,\theta}).
\end{equation}
We refer to $\Phi$ as the \emph{consistency map}. At this stage, this terminology refers only to consistency between the prescribed measure flow and the state distribution generated by the reduced FBSDE; no optimality property is being asserted.

It remains to verify that $\Phi(\mu)$ belongs to $\mathcal M$. First, \eqref{eq:reduced_FBSDE_estimate} yields
$$
\sup_{t\in[0,T]} \int_\Theta \mathbb E |X_t^{\mu,\theta}|^2 \lambda(\mathrm d\theta) <\infty.
$$
For $0\le s\le t\le T$,
$$
\begin{aligned}
X_t^{\mu,\theta}-X_s^{\mu,\theta} ={}
& \int_s^t B(r,X_r^{\mu,\theta},p_r^{\mu,\theta}, m_r^{\mu,\theta},\theta) \mathrm dr
\\
& + \sigma(W_t-W_s).
\end{aligned}
$$
Consequently,
$$
\begin{aligned}
& \int_\Theta \mathbb E |X_t^{\mu,\theta}-X_s^{\mu,\theta}|^2 \lambda(\mathrm d\theta)
\\
&\le 2|t-s| \int_s^t \int_\Theta \mathbb E \left| B(r,X_r^{\mu,\theta},p_r^{\mu,\theta}, m_r^{\mu,\theta},\theta) \right|^2 \lambda(\mathrm d\theta) \mathrm dr
\\
&\quad + 2\|\sigma\|_{\mathrm F}^2|t-s|.
\end{aligned}
$$
By Assumption \ref{asmp006}(a), \eqref{eq:mean_field_path_space}, and \eqref{eq:reduced_FBSDE_estimate}, the integral on the right-hand side is finite. Hence there exists a finite constant $C_\mu$ such that
\begin{equation}
\label{eq:Phi_time_modulus}
\int_\Theta \mathbb E |X_t^{\mu,\theta}-X_s^{\mu,\theta}|^2 \lambda(\mathrm d\theta) \le C_\mu|t-s|.
\end{equation}
Using the joint realization $(X_t^{\mu,\theta},X_s^{\mu,\theta})$ as a coupling gives
$$
\begin{aligned}
& \int_\Theta \mathcal W_2 \left( \Phi(\mu)_t^\theta, \Phi(\mu)_s^\theta \right)^2 \lambda(\mathrm d\theta)
\\
&\le C_\mu|t-s|.
\end{aligned}
$$
Therefore $\Phi(\mu)\in\mathcal M$, and the consistency map $\Phi:\mathcal M\longrightarrow\mathcal M$ is well-defined.

The quantitative smallness condition on $\kappa_K$ in Assumption \ref{asmp003} is not used in the preceding well-posedness argument. Only the boundedness of $\mathcal K$ on $L^2(\Theta;\mathbb R^k)$ is needed here. 

Next, we verify optimality for the frozen control problem.

\begin{proposition}
\label{prop:frozen_optimizer}
Suppose that Assumptions \ref{asmp001}--\ref{asmp006} hold. Let
$$
m\in C([0,T];L^2(\Theta;\mathbb R^k))
$$
be a prescribed mean-field path, and let $(X^{m,\theta},p^{m,\theta},q^{m,\theta})$ denote the unique solution of the corresponding reduced FBSDE. Then, for $\lambda$-a.e. $\theta$, the feedback control
\begin{equation}
\label{eq:frozen_optimal_control}
u_t^{m,\theta} = \hat u\bigl( t,X_t^{m,\theta},p_t^{m,\theta}, m_t(\theta),\theta \bigr)
\end{equation}
is the unique optimal admissible control for the stochastic control problem with the prescribed mean-field path $m$.
\end{proposition}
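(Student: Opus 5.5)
The plan is a direct sufficiency (verification) argument for the stochastic maximum principle, based on the strong concavity recorded in Remark~\ref{rem:reduced_concavity}. Fix $\theta$ outside a $\lambda$-null set such that Lemma~\ref{lem001} (via Remark~\ref{rem:prescribed_path_well-posedness}) provides the unique solution $(X,p,q):=(X^{m,\theta},p^{m,\theta},q^{m,\theta})$, and write $\hat u_t:=u_t^{m,\theta}$.

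First I check admissibility. By Assumption~\ref{asmp005}(a), $|\hat u_t|\le C_u(1+|X_t|+|p_t|+|m_t(\theta)|)$. Since $X,p\in\mathbb S^2$ and $m(\theta)\in L^2(0,T)$, this gives $\hat u\in\mathcal U$. Progressive measurability follows from the joint measurability in Assumption~\ref{asmp004}(c). The state driven by $\hat u$ is exactly $X$, because $B(t,X_t,p_t,m_t,\theta)=b(t,X_t,\hat u_t,m_t,\theta)$.

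Next, fix an arbitrary $v\in\mathcal U$ with state $X^v$. Because $b$ is Lipschitz in $x$ with linear growth, $X^v\in\mathbb S^2$. Set $\Delta_t:=X^v_t-X_t$; the $\sigma\,\mathrm dW$ terms cancel, so $\Delta$ has finite variation and $\Delta_0=0$. The terminal term is handled by the strong concavity of $g$ from Remark~\ref{rem:reduced_concavity}:
$$
g(X^v_T,m_T,\theta)-g(X_T,m_T,\theta)\le\langle p_T,\Delta_T\rangle-\tfrac{\nu}{2}|\Delta_T|^2 .
$$
Applying It\^o's formula to $\langle p_t,\Delta_t\rangle$ gives $\mathbb E\langle p_T,\Delta_T\rangle=\mathbb E\int_0^T\big[\langle p_t,b(t,X^v_t,v_t)-b(t,X_t,\hat u_t)\rangle-\langle F(t,X_t,p_t),\Delta_t\rangle\big]\mathrm dt$. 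The stochastic integral $\int\langle\Delta,q\,\mathrm dW\rangle$ is a true martingale, which I will justify by the BDG inequality using $\Delta\in\mathbb S^2$ and $q\in\mathbb H^2$. Collecting terms,
$$
J^\theta(v)-J^\theta(\hat u)\le\mathbb E\int_0^T\big[H(t,X^v_t,v_t,p_t)-H(t,X_t,\hat u_t,p_t)-\langle\partial_x\widehat H(t,X_t,p_t),\Delta_t\rangle\big]\mathrm dt-\tfrac{\nu}{2}\mathbb E|\Delta_T|^2 ,
$$
where I used the envelope identity $F=\partial_x\widehat H$ from Assumption~\ref{asmp005}(b). By definition of the maximizer, $H(t,X^v_t,v_t,p_t)\le\widehat H(t,X^v_t,p_t)$ and $H(t,X_t,\hat u_t,p_t)=\widehat H(t,X_t,p_t)$. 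The strong concavity of $\widehat H$ in $x$ then bounds the integrand by $-\frac{\beta}{2}|\Delta_t|^2$. Therefore
$$
J^\theta(v)-J^\theta(\hat u)\le -\tfrac{\beta}{2}\mathbb E\int_0^T|\Delta_t|^2\mathrm dt-\tfrac{\nu}{2}\mathbb E|\Delta_T|^2\le 0,
$$
so $\hat u$ is optimal. All rewards are finite by the quadratic growth in Assumption~\ref{asmp002}(d).

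For uniqueness, suppose $v$ is also optimal. The inequality above forces $\Delta\equiv0$, that is, $X^v=X$. In addition, the slack $\widehat H(t,X_t,p_t)-H(t,X_t,v_t,p_t)$ must have zero integral, since it enters additively with a nonpositive sign. Hence $H(t,X_t,v_t,p_t)=\widehat H(t,X_t,p_t)$ $\mathrm dt\otimes\mathrm d\mathbb P$-a.e., and uniqueness of the maximizer (Assumption~\ref{asmp004}(a)) gives $v=\hat u$. Alternatively, one can invoke the necessary condition from Subsection~\ref{subsec2.3} together with the uniqueness in Lemma~\ref{lem001}.

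I expect the main obstacle to be the careful bookkeeping in the It\^o/duality step. One has to keep the slack $H(X^v,v)-\widehat H(X^v)$ as a separate nonpositive term rather than discarding it, and verify the integrability conditions (the martingale property and the finiteness of $\mathbb E\int\langle p,b\rangle$) using only the linear growth bounds and the $\mathbb S^2$/$\mathbb H^2$ estimates.
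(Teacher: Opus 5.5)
Your proposal is correct and follows the paper's verification argument essentially step by step. The shared steps are admissibility via Assumptions~\ref{asmp004}(c) and \ref{asmp005}(a), the concavity bound on $g$ at time $T$, and It\^o's formula for $\langle p_t,\Delta_t\rangle$. You also split through $\widehat H(t,X^v_t,p_t)$ with the slack $\widehat H(t,X^v_t,p_t)-H(t,X^v_t,v_t,p_t)$ kept as a separate nonpositive term, and you get uniqueness from the vanishing of both nonnegative integrands plus uniqueness of the Hamiltonian maximizer. Your version differs only in small refinements: you retain the extra $-\tfrac{\nu}{2}\mathbb E|\Delta_T|^2$ term, and you justify the martingale property by BDG, which the paper simply asserts.
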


\begin{proof}
Fix $\theta\in\Theta$ outside a $\lambda$-null set for which the preceding FBSDE estimates hold, and write
$$
(X,p,q,u) = (X^{m,\theta},p^{m,\theta},q^{m,\theta},u^{m,\theta})
$$
for simplicity. Let $\widetilde u$ be any admissible control, and let $\widetilde X$ be the corresponding state process under the same prescribed mean-field path $m$. Set
$$
\Delta X_t:=\widetilde X_t-X_t.
$$

We first verify that the feedback control $u$ is admissible. By Assumption~\ref{asmp004}(c), $\hat u$ is jointly measurable. Since $(X,p)$ are adapted continuous processes and $t\mapsto m_t(\theta)$ is deterministic and measurable, it follows that
$$
u_t = \hat u\bigl(t,X_t,p_t,m_t(\theta),\theta\bigr)
$$
is progressively measurable.

Moreover, by Assumption~\ref{asmp005}(a),
$$
|u_t|^2 \le C\bigl(1+|X_t|^2+|p_t|^2+|m_t(\theta)|^2\bigr).
$$
Hence, using the a priori estimate for the frozen FBSDE,
$$
\mathbb E\int_0^T |u_t|^2\,\mathrm dt \le C\left( 1+ \mathbb E\sup_{t\in[0,T]}|X_t|^2 + \mathbb E\sup_{t\in[0,T]}|p_t|^2 + \int_0^T|m_t(\theta)|^2\,\mathrm dt \right) <\infty.
$$
Indeed,
$$
\int_\Theta\int_0^T |m_t(\theta)|^2\,\mathrm dt\,\lambda(\mathrm d\theta) \le T\sup_{t\in[0,T]}\|m_t\|_{L^2_\lambda}^2 <\infty,
$$
so that $m_\cdot(\theta)\in L^2(0,T;\mathbb R^k)$ for $\lambda$-a.e. $\theta$. Thus $u$ is admissible.

Since both controls are evaluated against the same exogenous path $m_t(\theta)$, the difference of their objective functionals is
$$
\begin{aligned}
J^\theta(\widetilde u;m)-J^\theta(u;m) = {}
& \mathbb E\int_0^T \Big[ f\bigl(t,\widetilde X_t,\widetilde u_t, m_t(\theta),\theta\bigr) - f\bigl(t,X_t,u_t,m_t(\theta),\theta\bigr) \Big] \,\mathrm dt
\\
&+ \mathbb E\Big[ g\bigl(\widetilde X_T,m_T(\theta),\theta\bigr) - g\bigl(X_T,m_T(\theta),\theta\bigr) \Big].
\end{aligned}
$$

By the concavity of $x\mapsto g(x,m_T(\theta),\theta)$,
$$
\begin{aligned}
& g\bigl(\widetilde X_T,m_T(\theta),\theta\bigr) - g\bigl(X_T,m_T(\theta),\theta\bigr)
\\
&\qquad\le \partial_x g\bigl(X_T,m_T(\theta),\theta\bigr) \cdot\Delta X_T = p_T\cdot\Delta X_T.
\end{aligned}
$$
Consequently,
\begin{equation}
\label{eq:frozen_verification_1}
\begin{aligned}
J^\theta(\widetilde u;m)-J^\theta(u;m) \le{}
& \mathbb E\int_0^T \Big[ f\bigl(t,\widetilde X_t,\widetilde u_t, m_t(\theta),\theta\bigr) - f\bigl(t,X_t,u_t,m_t(\theta),\theta\bigr) \Big] \,\mathrm dt
\\
& + \mathbb E[p_T\cdot\Delta X_T].
\end{aligned}
\end{equation}

Applying It\^o's formula to $p_t\cdot\Delta X_t$, using $\Delta X_0=0$, and taking expectations, we obtain
$$
\begin{aligned}
\mathbb E[p_T\cdot\Delta X_T] ={}
& \mathbb E\int_0^T p_t\cdot \Big[ b\bigl(t,\widetilde X_t,\widetilde u_t, m_t(\theta),\theta\bigr) - b\bigl(t,X_t,u_t,m_t(\theta),\theta\bigr) \Big] \,\mathrm dt
\\
& - \mathbb E\int_0^T \partial_x H\bigl( t,X_t,u_t,p_t; \theta,m_t(\theta) \bigr) \cdot\Delta X_t \,\mathrm dt.
\end{aligned}
$$
Here the stochastic-integral terms have zero expectation, and the quadratic covariation term vanishes because the diffusion coefficient is independent of the state and the control.

Substituting this identity into \eqref{eq:frozen_verification_1} and recalling the definition of the Hamiltonian gives
\begin{equation}
\label{eq:frozen_verification_2}
\begin{aligned}
J^\theta(\widetilde u;m)-J^\theta(u;m) \le \mathbb E\int_0^T \Big[ 
& H\bigl( t,\widetilde X_t,\widetilde u_t,p_t; \theta,m_t(\theta) \bigr) - H\bigl( t,X_t,u_t,p_t; \theta,m_t(\theta) \bigr)
\\
& - \partial_x H\bigl( t,X_t,u_t,p_t; \theta,m_t(\theta) \bigr) \cdot (\widetilde X_t-X_t) \Big] \,\mathrm dt.
\end{aligned}
\end{equation}

Since
$$
H\bigl( t,X_t,u_t,p_t; \theta,m_t(\theta) \bigr) = \widehat H\bigl( t,X_t,p_t;\theta,m_t(\theta) \bigr),
$$
the strong concavity of $\widehat H$ in the state variable,
established in Remark \ref{rem:reduced_concavity}, yields
\begin{equation}
\label{eq:frozen_verification_3}
\begin{aligned}
& H\bigl( t,\widetilde X_t,\widetilde u_t,p_t; \theta,m_t(\theta) \bigr) - H\bigl( t,X_t,u_t,p_t; \theta,m_t(\theta) \bigr)
\\
&\qquad - \left\langle \partial_x H\bigl( t,X_t,u_t,p_t; \theta,m_t(\theta) \bigr), \widetilde X_t-X_t \right\rangle
\\
& = H\bigl( t,\widetilde X_t,\widetilde u_t,p_t; \theta,m_t(\theta) \bigr) - \widehat H\bigl( t,\widetilde X_t,p_t;\theta,m_t(\theta) \bigr) 
\\
&\qquad + \widehat H\bigl( t,\widetilde X_t,p_t;\theta,m_t(\theta) \bigr) - \widehat H\bigl( t,X_t,p_t;\theta,m_t(\theta) \bigr)
\\
&\qquad - \left\langle \partial_x H\bigl( t,X_t,u_t,p_t; \theta,m_t(\theta) \bigr), \widetilde X_t-X_t \right\rangle
\\
&\le -\frac{\beta}{2} |\widetilde X_t-X_t|^2 + H\bigl( t,\widetilde X_t,\widetilde u_t,p_t; \theta,m_t(\theta) \bigr) - \widehat H\bigl( t,\widetilde X_t,p_t;\theta,m_t(\theta) \bigr), 
\end{aligned}
\end{equation}
where Assumption \ref{asmp005}(b) gives
$$
\partial_x H\bigl( t,X_t,u_t,p_t; \theta,m_t(\theta) \bigr) = \partial_x\widehat H\bigl( t,X_t,p_t;\theta,m_t(\theta) \bigr).
$$
Since
$$
H\bigl( t,\widetilde X_t,\widetilde u_t,p_t; \theta,m_t(\theta) \bigr) \le \widehat H\bigl( t,\widetilde X_t,p_t;\theta,m_t(\theta) \bigr),
$$
combining this inequality with \eqref{eq:frozen_verification_2} and \eqref{eq:frozen_verification_3} gives 
$$
\begin{aligned}
& J^\theta(\widetilde u;m)-J^\theta(u;m) 
\\
&\le{}
-\frac{\beta}{2} \mathbb E\int_0^T |\widetilde X_t-X_t|^2\,\mathrm dt 
\\
&\quad - \mathbb E\int_0^T \Big[\widehat H\bigl( t,\widetilde X_t,p_t;\theta,m_t(\theta) \bigr) - H\bigl( t,\widetilde X_t,\widetilde u_t,p_t; \theta,m_t(\theta) \bigr)\Big]\,\mathrm dt 
\\
&\le 0.
\end{aligned}
$$
Since $\widetilde u$ was arbitrary, $u^{m,\theta}$ is optimal for the stochastic control problem with prescribed mean-field path $m$.

If $\widetilde u$ is also optimal, then
$$
\begin{aligned}
0 
& = J^\theta(\widetilde u;m)-J^\theta(u;m) 
\\
&\le{}
-\frac{\beta}{2} \mathbb E\int_0^T |\widetilde X_t-X_t|^2\,\mathrm dt 
\\
&\quad - \mathbb E\int_0^T \Big[\widehat H\bigl( t,\widetilde X_t,p_t;\theta,m_t(\theta) \bigr) - H\bigl( t,\widetilde X_t,\widetilde u_t,p_t; \theta,m_t(\theta) \bigr)\Big]\,\mathrm dt 
\\
& \le 0.
\end{aligned}
$$
Since both integrands above are non-negative, it follows that 
$$
\widetilde X_t = X_t, 
\qquad 
\widehat H\bigl( t,\widetilde X_t,p_t;\theta,m_t(\theta) \bigr) = H\bigl( t,\widetilde X_t,\widetilde u_t,p_t; \theta,m_t(\theta) \bigr).
$$
By Assumption \ref{asmp004}(a) and (b), the Hamiltonian maximizer $\hat u$ is unique. Therefore $\widetilde u=u$, $\mathrm dt\otimes\mathrm d\mathbb P$-a.e.
\end{proof}

\subsection{Stability of the Consistency Map}
\label{subsec3.2}

We next establish a quantitative stability estimate for the consistency map. The main point is that the dependence of the reduced FBSDE on the prescribed measure flow occurs only through the kernel-based mean field.

Let $\mu,\nu\in\mathcal M$, and define
$$
m_t^\mu := m_t^K[\mu],
\qquad
m_t^\nu := m_t^K[\nu].
$$
For $\lambda$-a.e. $\theta$, the first-moment profiles of $\mu$, $\nu$ satisfy
$$
|a_t^\mu(\theta)-a_t^\nu(\theta)|
\le
\mathcal W_2
\left(
\mu_t^\theta,\nu_t^\theta
\right).
$$
It follows that
$$
\begin{aligned}
\|a_t^\mu-a_t^\nu\|_{L^2_\lambda}^2
&\le
\int_\Theta
\mathcal W_2
\left(
\mu_t^\theta,\nu_t^\theta
\right)^2
\lambda(\mathrm d\theta).
\end{aligned}
$$
Since
$$
m_t^\mu-m_t^\nu
=
\mathcal K(a_t^\mu-a_t^\nu),
$$
Assumption \ref{asmp003} yields
\begin{equation}
\label{eq:mean_field_stability}
\begin{aligned}
\|m_t^\mu-m_t^\nu\|_{L^2_\lambda}
&\le \kappa_K \|a_t^\mu-a_t^\nu\|_{L^2_\lambda}
\\
&\le \kappa_K \left( \int_\Theta \mathcal W_2 \left( \mu_t^\theta,\nu_t^\theta \right)^2 \lambda(\mathrm d\theta) \right)^{1/2}
\\
&\le \kappa_K d(\mu,\nu).
\end{aligned}
\end{equation}

We now derive the corresponding stability estimate for the reduced FBSDE.

\begin{lemma}
\label{lem002}
Suppose that Assumption \ref{asmp006} holds. Let
$$
(X^{\mu,\theta},p^{\mu,\theta},q^{\mu,\theta})
$$
and
$$
(X^{\nu,\theta},p^{\nu,\theta},q^{\nu,\theta})
$$
be the solutions of \eqref{eq:reduced_FBSDE} corresponding to $\mu$ and $\nu$, respectively. Then there exists a constant
$$
C_{\mathrm{stab}}>0,
$$
depending only on $T$ and the constants in Assumption \ref{asmp006}, such that
\begin{equation}
\label{eq:FBSDE_stability}
\begin{aligned}
&
\int_\Theta
\mathbb E
\sup_{t\in[0,T]}
|X_t^{\mu,\theta}-X_t^{\nu,\theta}|^2
\lambda(\mathrm d\theta)
\\
&\quad+
\int_\Theta
\mathbb E
\sup_{t\in[0,T]}
|p_t^{\mu,\theta}-p_t^{\nu,\theta}|^2
\lambda(\mathrm d\theta)
\\
&\quad+
\int_\Theta
\mathbb E
\int_0^T
|q_t^{\mu,\theta}-q_t^{\nu,\theta}|^2
\mathrm dt\,
\lambda(\mathrm d\theta)
\\
&\le
C_{\mathrm{stab}}
\left[
\|m_T^\mu-m_T^\nu\|_{L^2_\lambda}^2
+
\int_0^T
\|m_t^\mu-m_t^\nu\|_{L^2_\lambda}^2
\mathrm dt
\right].
\end{aligned}
\end{equation}
\end{lemma}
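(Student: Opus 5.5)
The plan is to fix $\theta$ outside a $\lambda$-null set and prove a per-type stability estimate. Its constant should depend only on $T$, $\sigma$ and the constants of Assumption \ref{asmp006}, and in particular not on $\theta$ or on the paths. The result then follows by integrating in $\lambda(\mathrm d\theta)$ and using Fubini. Write
$$
\Delta X:=X^{\mu,\theta}-X^{\nu,\theta},\qquad \Delta p:=p^{\mu,\theta}-p^{\nu,\theta},\qquad \Delta q:=q^{\mu,\theta}-q^{\nu,\theta},\qquad \Delta m_t:=m_t^{\mu,\theta}-m_t^{\nu,\theta}.
$$
Since $\sigma$ is constant, $\Delta X$ has no martingale part. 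We have $\Delta X_0=0$, and
$$
\Delta p_T=\Gamma(X_T^{\mu,\theta},m_T^{\mu,\theta},\theta)-\Gamma(X_T^{\nu,\theta},m_T^{\nu,\theta},\theta).
$$

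\textbf{Step 1: duality (energy) identity.} Apply It\^o's formula to $\langle \Delta p_t,\Delta X_t\rangle$ on $[0,T]$ and take expectations; the stochastic integral is a true martingale by the square-integrability from Lemma \ref{lem001}. In each coefficient difference, add and subtract the value at $(X^{\nu,\theta},p^{\nu,\theta})$ with the $\mu$-mean field. This splits the drift term into two parts. The first is the frozen-$m$ monotone part, bounded below by $\beta|\Delta X|^2$ through \eqref{eq:PW_monotonicity}. The second is a perturbation bounded by $L_m|\Delta m_t|(|\Delta X_t|+|\Delta p_t|)$. Treat the terminal term the same way: \eqref{eq:PW_terminal} gives $-\nu|\Delta X_T|^2$, plus an error $L_{\Gamma,m}|\Delta m_T||\Delta X_T|$. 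The result is
$$
\nu\mathbb E|\Delta X_T|^2+\beta\,\mathbb E\!\int_0^T|\Delta X_t|^2\mathrm dt\le \mathbb E\Big[L_{\Gamma,m}|\Delta m_T||\Delta X_T|+L_m\!\int_0^T|\Delta m_t|(|\Delta X_t|+|\Delta p_t|)\mathrm dt\Big].
$$
Young's inequality absorbs the $\Delta X$ terms on the right. What remains is $\varepsilon\,\mathbb E\int_0^T|\Delta p_t|^2\mathrm dt+C_\varepsilon\big(|\Delta m_T|^2+\int_0^T|\Delta m_t|^2\mathrm dt\big)$.

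\textbf{Step 2: estimates for the backward and forward components.} Treat $\Delta p$ as the solution of a Lipschitz BSDE. Its driver difference is bounded by $L_{xp}(|\Delta X|+|\Delta p|)+L_m|\Delta m|$, and its terminal value by $L_\Gamma|\Delta X_T|+L_{\Gamma,m}|\Delta m_T|$. The standard BSDE estimate (It\^o on $e^{\gamma t}|\Delta p_t|^2$, Gronwall, then Burkholder--Davis--Gundy) yields
$$
\mathbb E\sup_t|\Delta p_t|^2+\mathbb E\!\int_0^T|\Delta q_t|^2\mathrm dt\le C\Big(\mathbb E|\Delta X_T|^2+\mathbb E\!\int_0^T|\Delta X_t|^2\mathrm dt+|\Delta m_T|^2+\int_0^T|\Delta m_t|^2\mathrm dt\Big).
$$
Substituting this into Step 1 with $\varepsilon C<\min(\beta,\nu)/2$ closes the loop. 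It bounds $\mathbb E|\Delta X_T|^2+\mathbb E\int_0^T|\Delta X_t|^2\mathrm dt$, and hence the whole $(\Delta p,\Delta q)$ block, by $C(|\Delta m_T|^2+\int_0^T|\Delta m_t|^2\mathrm dt)$. For $\mathbb E\sup_t|\Delta X_t|^2$, use the forward equation directly:
$$
|\Delta X_t|\le\int_0^t\big(L_{xp}(|\Delta X_s|+|\Delta p_s|)+L_m|\Delta m_s|\big)\mathrm ds.
$$
Gronwall, with the bound on $\mathbb E\int_0^T|\Delta p_t|^2\mathrm dt$ already obtained, then controls it.

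\textbf{Main obstacle.} The delicate point is closing the estimate in Step 2 without a smallness condition on $T$. This is exactly where the monotonicity with $\beta,\nu>0$ enters. Monotonicity controls $\Delta X$, but the perturbation produces a $\Delta p$ term that must be absorbed, and the BSDE estimate bounds $\Delta p$ only through $\Delta X$. The circularity is resolved by choosing $\varepsilon$ small relative to the BSDE constant, which depends only on $T$, $L_{xp}$, $L_\Gamma$ and $L_m$. This is the standard Peng--Wu stability argument, and I would follow it.

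A minor further check is that all constants are uniform in $\theta$. They are, because the constants of Assumption \ref{asmp006} are $\theta$-independent. Integrating over $\Theta$ then turns $|\Delta m_t|^2$ into $\|m_t^\mu-m_t^\nu\|_{L^2_\lambda}^2$, which gives \eqref{eq:FBSDE_stability}.
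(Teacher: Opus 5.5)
Your proposal is correct and follows essentially the same route as the paper: the energy identity for $\langle\Delta X_t,\Delta p_t\rangle$ with the coefficients split at the common mean field $m^{\mu,\theta}$, the Lipschitz BSDE estimate bounding $(\Delta p,\Delta q)$ by $\Delta X$ and $\Delta m$, absorption of the $\Delta X$ terms after fixing a small Young parameter $\varepsilon$, the forward equation for $\mathbb E\sup_t|\Delta X_t|^2$, and integration in $\lambda(\mathrm d\theta)$. The differences are cosmetic: the paper handles the last step by Cauchy--Schwarz with the already-established $L^2$ bounds instead of Gronwall, and $\sigma$ does not enter the constant because the diffusion cancels in the difference.
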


\begin{proof}
For $\lambda$-a.e. $\theta$, set
$$
\Delta X_t^\theta
:=
X_t^{\mu,\theta}
-
X_t^{\nu,\theta},
\qquad
\Delta p_t^\theta
:=
p_t^{\mu,\theta}
-
p_t^{\nu,\theta},
$$
$$
\Delta q_t^\theta
:=
q_t^{\mu,\theta}
-
q_t^{\nu,\theta},
\qquad
\Delta m_t^\theta
:=
m_t^{\mu,\theta}
-
m_t^{\nu,\theta}.
$$
Since the two forward equations have the same initial condition and the same constant diffusion coefficient,
$$
\Delta X_0^\theta=0,
$$
and
$$
\mathrm d\Delta X_t^\theta
=
\Delta B_t^\theta\,\mathrm dt,
$$
where
$$
\begin{aligned}
\Delta B_t^\theta
:={}&
B(t,X_t^{\mu,\theta},p_t^{\mu,\theta},
m_t^{\mu,\theta},\theta)
\\
&-
B(t,X_t^{\nu,\theta},p_t^{\nu,\theta},
m_t^{\nu,\theta},\theta).
\end{aligned}
$$
Similarly, set
$$
\begin{aligned}
\Delta F_t^\theta
:={}&
F(t,X_t^{\mu,\theta},p_t^{\mu,\theta},
m_t^{\mu,\theta},\theta)
\\
&-
F(t,X_t^{\nu,\theta},p_t^{\nu,\theta},
m_t^{\nu,\theta},\theta).
\end{aligned}
$$

By Assumption \ref{asmp006}(a), the standard stability estimate for Lipschitz backward stochastic differential equations gives
\begin{equation}
\label{eq:BSDE_stability_aux}
\begin{aligned}
&
\mathbb E
\sup_{t\in[0,T]}
|\Delta p_t^\theta|^2
+
\mathbb E
\int_0^T
|\Delta q_t^\theta|^2
\mathrm dt
\\
&\le
C
\Bigg[
\mathbb E|\Delta X_T^\theta|^2
+
\mathbb E\int_0^T
|\Delta X_t^\theta|^2
\mathrm dt
+
|\Delta m_T^\theta|^2
+
\int_0^T
|\Delta m_t^\theta|^2
\mathrm dt
\Bigg].
\end{aligned}
\end{equation}

We next exploit the monotonicity structure. Applying It\^o's formula to
$$
\langle\Delta X_t^\theta,\Delta p_t^\theta\rangle
$$
and taking expectations yields
\begin{equation}
\label{eq:energy_identity}
\mathbb E
\langle
\Delta X_T^\theta,
\Delta p_T^\theta
\rangle
=
\mathbb E
\int_0^T
\left[
\langle\Delta B_t^\theta,\Delta p_t^\theta\rangle
-
\langle\Delta F_t^\theta,\Delta X_t^\theta\rangle
\right]
\mathrm dt.
\end{equation}
There is no quadratic-covariation term because $\Delta X^\theta$ has no martingale part.

To separate the variation in $(x,p)$ from the variation in the mean field, insert and subtract the coefficients evaluated at the common mean field $m_t^{\mu,\theta}$. Assumption \ref{asmp006} then gives
\begin{equation}
\label{eq:energy_drift_bound}
\begin{aligned}
&
\langle\Delta B_t^\theta,\Delta p_t^\theta\rangle
-
\langle\Delta F_t^\theta,\Delta X_t^\theta\rangle
\\
&\ge
\beta|\Delta X_t^\theta|^2
-
C|\Delta m_t^\theta|
\left(
|\Delta X_t^\theta|
+
|\Delta p_t^\theta|
\right).
\end{aligned}
\end{equation}
Likewise,
$$
\Delta p_T^\theta
=
\Gamma(X_T^{\mu,\theta},m_T^{\mu,\theta},\theta)
-
\Gamma(X_T^{\nu,\theta},m_T^{\nu,\theta},\theta),
$$
and hence
\begin{equation}
\label{eq:energy_terminal_bound}
\left\langle
\Delta X_T^\theta,
\Delta p_T^\theta
\right\rangle
\le
-\nu|\Delta X_T^\theta|^2
+
C|\Delta X_T^\theta||\Delta m_T^\theta|.
\end{equation}

Combining \eqref{eq:energy_identity}, \eqref{eq:energy_drift_bound}, and \eqref{eq:energy_terminal_bound}, and then applying Young's inequality, gives, for every $\varepsilon>0$,
$$
\begin{aligned}
&
\nu\mathbb E|\Delta X_T^\theta|^2
+
\beta
\mathbb E\int_0^T
|\Delta X_t^\theta|^2
\mathrm dt
\\
&\le
\varepsilon
\left[
\mathbb E|\Delta X_T^\theta|^2
+
\mathbb E\int_0^T
|\Delta X_t^\theta|^2
\mathrm dt
+
\mathbb E\int_0^T
|\Delta p_t^\theta|^2
\mathrm dt
\right]
\\
&\quad+
C_\varepsilon
\left[
|\Delta m_T^\theta|^2
+
\int_0^T
|\Delta m_t^\theta|^2
\mathrm dt
\right].
\end{aligned}
$$
Using \eqref{eq:BSDE_stability_aux} and choosing $\varepsilon>0$ sufficiently small, the terms involving $\Delta X^\theta$ can be absorbed into the left-hand side. We obtain
\begin{equation}
\label{eq:X_integral_stability}
\begin{aligned}
&
\mathbb E|\Delta X_T^\theta|^2
+
\mathbb E\int_0^T
|\Delta X_t^\theta|^2
\mathrm dt
\\
&\le
C
\left[
|\Delta m_T^\theta|^2
+
\int_0^T
|\Delta m_t^\theta|^2
\mathrm dt
\right].
\end{aligned}
\end{equation}
Substituting this into \eqref{eq:BSDE_stability_aux} yields
\begin{equation}
\label{eq:pq_stability}
\begin{aligned}
&
\mathbb E
\sup_{t\in[0,T]}
|\Delta p_t^\theta|^2
+
\mathbb E
\int_0^T
|\Delta q_t^\theta|^2
\mathrm dt
\\
&\le
C
\left[
|\Delta m_T^\theta|^2
+
\int_0^T
|\Delta m_t^\theta|^2
\mathrm dt
\right].
\end{aligned}
\end{equation}

Finally,
$$
\Delta X_t^\theta
=
\int_0^t
\Delta B_s^\theta
\mathrm ds,
$$
so that
$$
\begin{aligned}
\mathbb E
\sup_{t\in[0,T]}
|\Delta X_t^\theta|^2
&\le
T
\mathbb E
\int_0^T
|\Delta B_t^\theta|^2
\mathrm dt
\\
&\le
C
\mathbb E
\int_0^T
\left(
|\Delta X_t^\theta|^2
+
|\Delta p_t^\theta|^2
+
|\Delta m_t^\theta|^2
\right)
\mathrm dt.
\end{aligned}
$$
Together with \eqref{eq:X_integral_stability} and \eqref{eq:pq_stability}, this gives
$$
\begin{aligned}
&
\mathbb E
\sup_{t\in[0,T]}
|\Delta X_t^\theta|^2
+
\mathbb E
\sup_{t\in[0,T]}
|\Delta p_t^\theta|^2
+
\mathbb E
\int_0^T
|\Delta q_t^\theta|^2
\mathrm dt
\\
&\le
C
\left[
|\Delta m_T^\theta|^2
+
\int_0^T
|\Delta m_t^\theta|^2
\mathrm dt
\right].
\end{aligned}
$$
The constant $C$ is independent of $\theta$. Integrating with respect to $\lambda(\mathrm d\theta)$ proves \eqref{eq:FBSDE_stability}.
\end{proof}

The preceding estimate immediately gives the continuity of the consistency map.

\begin{proposition}
\label{prop:Phi_stability}
Suppose that Assumptions \ref{asmp001}, \ref{asmp003}, and \ref{asmp006} hold. Then there exists a constant $C_\Phi>0$ such that
\begin{equation}
\label{eq:Phi_Lipschitz}
d(\Phi(\mu),\Phi(\nu))
\le
C_\Phi\kappa_K\,d(\mu,\nu),
\qquad
\mu,\nu\in\mathcal M,
\end{equation}
where $C_\Phi$ depends only on $T$ and the constants in Assumption \ref{asmp006}. In particular,
$$
\Phi:\mathcal M\longrightarrow\mathcal M
$$
is continuous.
\end{proposition}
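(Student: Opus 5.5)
The plan is to combine the coupling bound for Wasserstein distances with Lemma \ref{lem002} and the mean-field estimate \eqref{eq:mean_field_stability}.

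\textbf{Step 1: reduce the Wasserstein distance to $L^2$ state differences.} Fix $\mu,\nu\in\mathcal M$ and $t\in[0,T]$. For $\lambda$-a.e. $\theta$, the pair $(X_t^{\mu,\theta},X_t^{\nu,\theta})$ is defined on the common probability space with the same initial datum $\xi^\theta$ and the same Brownian motion. Its joint law is therefore a coupling of $\Phi(\mu)_t^\theta$ and $\Phi(\nu)_t^\theta$, which gives
$$
\mathcal W_2\bigl(\Phi(\mu)_t^\theta,\Phi(\nu)_t^\theta\bigr)^2
\le
\mathbb E|X_t^{\mu,\theta}-X_t^{\nu,\theta}|^2
\le
\mathbb E\sup_{s\in[0,T]}|X_s^{\mu,\theta}-X_s^{\nu,\theta}|^2 .
$$
Integrating in $\lambda(\mathrm d\theta)$ and taking the supremum over $t$ yields
$$
d(\Phi(\mu),\Phi(\nu))^2
\le
\int_\Theta\mathbb E\sup_{s\in[0,T]}|X_s^{\mu,\theta}-X_s^{\nu,\theta}|^2\lambda(\mathrm d\theta).
$$
Joint measurability of the parameterized solutions, established after Lemma \ref{lem001}, guarantees that these integrals make sense.

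\textbf{Step 2: apply the FBSDE stability estimate.} Lemma \ref{lem002} bounds the right-hand side by
$$
C_{\mathrm{stab}}\Bigl[\|m_T^\mu-m_T^\nu\|_{L^2_\lambda}^2+\int_0^T\|m_t^\mu-m_t^\nu\|_{L^2_\lambda}^2\,\mathrm dt\Bigr].
$$

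\textbf{Step 3: control the mean-field difference.} By \eqref{eq:mean_field_stability}, each term satisfies $\|m_t^\mu-m_t^\nu\|_{L^2_\lambda}\le\kappa_K\,d(\mu,\nu)$ uniformly in $t\in[0,T]$. Hence the bracket is at most $(1+T)\kappa_K^2 d(\mu,\nu)^2$. Taking square roots gives \eqref{eq:Phi_Lipschitz} with
$$
C_\Phi=\sqrt{C_{\mathrm{stab}}(1+T)},
$$
which depends only on $T$ and the constants in Assumption \ref{asmp006}, as Lemma \ref{lem002} asserts for $C_{\mathrm{stab}}$. Continuity of $\Phi$ on $\mathcal M$ (which maps into $\mathcal M$, as verified above) then follows immediately from this Lipschitz bound.

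\textbf{Main obstacle.} The only delicate point is that Lemma \ref{lem002} needs a constant that is uniform in $\theta$. This uniformity is already built into its proof, since the per-$\theta$ constants depend only on $L_{xp},L_m,L_\Gamma,L_{\Gamma,m},\beta,\nu,T$. I would still check that the null sets where the per-type estimates fail are handled consistently: take the union of the exceptional sets for $\mu$ and $\nu$, which is still $\lambda$-null. Assumption \ref{asmp003} enters only through the finiteness of $\kappa_K$; no smallness is required here.
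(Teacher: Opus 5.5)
Your proposal is correct and matches the paper's proof step for step. You use the synchronous coupling $(X_t^{\mu,\theta},X_t^{\nu,\theta})$ to bound $d(\Phi(\mu),\Phi(\nu))^2$ by the integrated sup-difference of the states, control that by Lemma \ref{lem002}, and conclude with \eqref{eq:mean_field_stability}, which gives the same constant $C_\Phi=\sqrt{C_{\mathrm{stab}}(1+T)}$.
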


\begin{proof}
For each $(t,\theta)$,
$$
(X_t^{\mu,\theta},X_t^{\nu,\theta})
$$
is a coupling of $\Phi(\mu)_t^\theta$ and $\Phi(\nu)_t^\theta$. Hence
$$
\mathcal W_2
\left(
\Phi(\mu)_t^\theta,
\Phi(\nu)_t^\theta
\right)^2
\le
\mathbb E
|X_t^{\mu,\theta}-X_t^{\nu,\theta}|^2.
$$
Lemma \ref{lem002} therefore implies
$$
\begin{aligned}
d(\Phi(\mu),\Phi(\nu))^2
&\le
\int_\Theta
\mathbb E
\sup_{t\in[0,T]}
|X_t^{\mu,\theta}-X_t^{\nu,\theta}|^2
\lambda(\mathrm d\theta)
\\
&\le
C_{\mathrm{stab}}
\left[
\|m_T^\mu-m_T^\nu\|_{L^2_\lambda}^2
+
\int_0^T
\|m_t^\mu-m_t^\nu\|_{L^2_\lambda}^2
\mathrm dt
\right].
\end{aligned}
$$
Using \eqref{eq:mean_field_stability},
$$
\begin{aligned}
d(\Phi(\mu),\Phi(\nu))^2
&\le
C_{\mathrm{stab}}
(1+T)
\kappa_K^2
d(\mu,\nu)^2.
\end{aligned}
$$
Thus \eqref{eq:Phi_Lipschitz} holds with, for instance,
$$
C_\Phi
=
\sqrt{C_{\mathrm{stab}}(1+T)}.
$$
\end{proof}

The estimate above is used only to establish continuity of $\Phi$. In particular, the argument does not require
$$
C_\Phi\kappa_K<1
$$
and does not rely on a contraction principle. The quantitative smallness condition in Assumption \ref{asmp003} will instead be used in the construction of an invariant closed convex set for the Schauder fixed-point argument.

\subsection{Existence of a Mean Field Nash Equilibrium}
\label{subsec3.3}

We now establish the existence of a mean field Nash equilibrium. Compactness of $\Phi$ on the space of type-indexed measure flows does not follow directly from the present assumptions, since no regularity with respect to the type variable $\theta$ is imposed. The kernel structure provides the required compactness of the aggregate mean field. We therefore perform the fixed-point argument on $C([0,T];L^2(\Theta;\mathbb R^k))$ and then recover a fixed point of $\Phi$.

For a prescribed path
$$
m\in C([0,T];L^2(\Theta;\mathbb R^k)),
$$
consider the FBSDE
\begin{equation}
\label{eq:frozen_m_FBSDE}
\left\{
\begin{aligned}
\mathrm dX_t^{m,\theta}
& = B\bigl( t,X_t^{m,\theta},p_t^{m,\theta}, m_t(\theta),\theta \bigr)\mathrm dt + \sigma\,\mathrm dW_t,
\\
\mathrm dp_t^{m,\theta}
& = -F\bigl( t,X_t^{m,\theta},p_t^{m,\theta}, m_t(\theta),\theta \bigr)\mathrm dt + q_t^{m,\theta}\,\mathrm dW_t,
\\
X_0^{m,\theta}
& = \xi^\theta,
\\
p_T^{m,\theta}
& = \Gamma\bigl( X_T^{m,\theta},m_T(\theta),\theta \bigr).
\end{aligned}
\right.
\end{equation}
In particular, there exist constants $C_1,C_2>0$, independent of $m$, such that
\begin{equation}
\label{eq:frozen_m_apriori}
\begin{aligned}
&\int_\Theta \mathbb E\left[ \sup_{t\in[0,T]}|X_t^{m,\theta}|^2 + \sup_{t\in[0,T]}|p_t^{m,\theta}|^2 + \int_0^T|q_t^{m,\theta}|^2\,\mathrm dt \right] \lambda(\mathrm d\theta)
\\
&\qquad\le C_1 + C_2\left( \|m_T\|_{L^2_\lambda}^2 + \int_0^T\|m_t\|_{L^2_\lambda}^2\,\mathrm dt \right).
\end{aligned}
\end{equation}

Define
$$
\mathcal T(m)_t := \mathcal K \left( \theta\mapsto \mathbb E[X_t^{m,\theta}] \right),
\qquad
t\in[0,T].
$$
The next lemma collects the properties required for the fixed-point argument.

\begin{lemma}
\label{lem003}
Suppose that Assumptions \ref{asmp001}, \ref{asmp003}, and
\ref{asmp006} hold. If
\begin{equation}
\label{eq:kernel_invariance_smallness}
C_2(1+T)\kappa_K^2<1,
\end{equation}
where $C_2$ is the constant in \eqref{eq:frozen_m_apriori}, then there exists $R>0$ such that $\mathcal T$ maps
$$
B_R := \left\{ m\in C([0,T];L^2(\Theta;\mathbb R^k)): \sup_{t\in[0,T]}\|m_t\|_{L^2_\lambda}^2\le R \right\}
$$
into itself. Moreover, $\mathcal T:B_R\longrightarrow B_R$ is continuous and has relatively compact image.
\end{lemma}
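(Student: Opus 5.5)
We prove the three properties in turn: invariance of $B_R$, continuity of $\mathcal T$, and relative compactness of $\mathcal T(B_R)$. Throughout, $m\in C([0,T];L^2(\Theta;\mathbb R^k))$ is fixed, and $(X^{m,\theta},p^{m,\theta},q^{m,\theta})$ is the jointly measurable solution of \eqref{eq:frozen_m_FBSDE} provided by Lemma \ref{lem001} and Remark \ref{rem:prescribed_path_well-posedness}.

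\emph{Invariance.} Set $a^m_t(\theta):=\mathbb E[X_t^{m,\theta}]$. Jensen's inequality and \eqref{eq:frozen_m_apriori} give
$$
\sup_t\|\mathcal T(m)_t\|_{L^2_\lambda}^2\le \kappa_K^2\int_\Theta\mathbb E\sup_t|X_t^{m,\theta}|^2\lambda(\mathrm d\theta)\le \kappa_K^2\bigl(C_1+C_2(1+T)\sup_t\|m_t\|_{L^2_\lambda}^2\bigr).
$$
For $m\in B_R$ the right-hand side is at most $\kappa_K^2C_1+C_2(1+T)\kappa_K^2R$. Under \eqref{eq:kernel_invariance_smallness} this is $\le R$ as soon as $R\ge \kappa_K^2C_1/(1-C_2(1+T)\kappa_K^2)$, and we fix such an $R$. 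We also need $\mathcal T(m)$ to be continuous in time with values in $L^2$. This follows from the time-modulus estimate proved after Lemma \ref{lem001} (the analogue of \eqref{eq:Phi_time_modulus}): $\|a^m_t-a^m_s\|_{L^2_\lambda}^2\le\int_\Theta\mathbb E|X^{m,\theta}_t-X^{m,\theta}_s|^2\lambda(\mathrm d\theta)\le C_R|t-s|$. Here $C_R$ depends only on $R$, since the linear growth of $B$ from Assumption \ref{asmp006}(a), combined with \eqref{eq:frozen_m_apriori}, bounds $\int_\Theta\mathbb E\int_0^T|B|^2\,\mathrm dt\,\lambda(\mathrm d\theta)$ uniformly over $B_R$. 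Hence $\mathcal T(B_R)\subset B_R$, and $\mathcal T(B_R)$ is uniformly Lipschitz in the sense that $\|\mathcal T(m)_t-\mathcal T(m)_s\|_{L^2_\lambda}\le\kappa_K\sqrt{C_R|t-s|}$.

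\emph{Continuity.} The proof of Lemma \ref{lem002} uses the mean field only as an exogenous path, so it applies verbatim to two paths $m,m'\in C([0,T];L^2)$. It yields
$$
\sup_t\|\mathcal T(m)_t-\mathcal T(m')_t\|^2_{L^2_\lambda}\le\kappa_K^2C_{\mathrm{stab}}(1+T)\sup_t\|m_t-m'_t\|^2_{L^2_\lambda}.
$$
Thus $\mathcal T$ is Lipschitz on $B_R$, hence continuous.

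\emph{Compactness.} This is the main point. We apply the Arzel\`a--Ascoli theorem in $C([0,T];L^2(\Theta;\mathbb R^k))$. Equicontinuity is the uniform modulus obtained above. For pointwise relative compactness, note that for each fixed $t$ the set $\{a^m_t: m\in B_R\}$ is bounded in $L^2$ by the invariance estimate. Since $\mathcal K$ is Hilbert--Schmidt, hence compact, it maps this bounded set to a relatively compact set $\{\mathcal T(m)_t:m\in B_R\}$. The Arzel\`a--Ascoli theorem for functions valued in a metric space (equicontinuity together with pointwise relative compactness) then gives relative compactness of $\mathcal T(B_R)$. The only delicate point is that the time-modulus constant $C_R$ must be uniform over $B_R$. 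This uniformity comes from \eqref{eq:frozen_m_apriori} together with $\int_0^T\|m_t\|^2_{L^2_\lambda}\,\mathrm dt\le TR$, which is why the estimate is tracked through $R$ and not through $C_\mu$.
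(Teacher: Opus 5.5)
Your proof is correct and follows essentially the same route as the paper. Invariance comes from Jensen and \eqref{eq:frozen_m_apriori}, Lipschitz continuity from the stability estimate of Lemma \ref{lem002}, and relative compactness from the compactness of $\mathcal K$ together with a uniform $O(|t-s|)$ time modulus and the Banach-valued Arzel\`a--Ascoli theorem. The only cosmetic differences are these: you bound the modulus through $\mathbb E|X_t^{m,\theta}-X_s^{m,\theta}|^2$, keeping the Brownian increment, whereas the paper takes expectations in the forward equation first so the noise term drops out; and you explicitly check that $\mathcal T(m)$ is continuous in time, which the paper leaves implicit until the compactness step.
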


\begin{proof}
We first prove the invariance of $B_R$. By Jensen's inequality and the boundedness of $\mathcal K$,
$$
\begin{aligned}
\|\mathcal T(m)_t\|_{L^2_\lambda}^2
&\le \kappa_K^2 \int_\Theta \left| \mathbb E[X_t^{m,\theta}] \right|^2 \lambda(\mathrm d\theta)
\\
&\le \kappa_K^2 \int_\Theta \mathbb E \left[ \sup_{s\in[0,T]}|X_s^{m,\theta}|^2 \right] \lambda(\mathrm d\theta).
\end{aligned}
$$
If $m\in B_R$, then \eqref{eq:frozen_m_apriori} gives
$$
\sup_{t\in[0,T]} \|\mathcal T(m)_t\|_{L^2_\lambda}^2 \le \kappa_K^2 C_1 + C_2(1+T)\kappa_K^2 R.
$$
In view of \eqref{eq:kernel_invariance_smallness}, we may choose
$$
R \ge \frac{\kappa_K^2C_1} {1-C_2(1+T)\kappa_K^2}.
$$
Then $\mathcal T(B_R)\subset B_R$.

We next prove continuity. Let $m,\widetilde m\in B_R$, and let $(X^m,p^m,q^m)$ and $(X^{\widetilde m},p^{\widetilde m},q^{\widetilde m})$ be the corresponding solutions of \eqref{eq:frozen_m_FBSDE}. The stability argument in Lemma \ref{lem002}, applied directly to the two prescribed mean-field paths, yields
$$
\begin{aligned}
&\int_\Theta \mathbb E\left[ \sup_{t\in[0,T]} |X_t^{m,\theta}-X_t^{\widetilde m,\theta}|^2 \right] \lambda(\mathrm d\theta)
\\
&\qquad\le C_{\mathrm{stab}} \left( \|m_T-\widetilde m_T\|_{L^2_\lambda}^2 + \int_0^T \|m_t-\widetilde m_t\|_{L^2_\lambda}^2 \,\mathrm dt \right)
\\
&\qquad\le C_{\mathrm{stab}}(1+T) \sup_{t\in[0,T]} \|m_t-\widetilde m_t\|_{L^2_\lambda}^2.
\end{aligned}
$$
Consequently,
$$
\begin{aligned}
\sup_{t\in[0,T]} \|\mathcal T(m)_t-\mathcal T(\widetilde m)_t\|_{L^2_\lambda}^2
&\le \kappa_K^2 \int_\Theta \mathbb E\left[ \sup_{t\in[0,T]} |X_t^{m,\theta}-X_t^{\widetilde m,\theta}|^2 \right] \lambda(\mathrm d\theta)
\\
&\le C_{\mathrm{stab}}(1+T)\kappa_K^2 \sup_{t\in[0,T]} \|m_t-\widetilde m_t\|_{L^2_\lambda}^2.
\end{aligned}
$$
Thus $\mathcal T$ is continuous on $B_R$.

It remains to prove relative compactness of $\mathcal T(B_R)$. For every fixed $t\in[0,T]$, \eqref{eq:frozen_m_apriori} implies
$$
\sup_{m\in B_R} \int_\Theta \left| \mathbb E[X_t^{m,\theta}] \right|^2 \lambda(\mathrm d\theta) <\infty.
$$
Since $K\in L^2(\Theta\times\Theta)$, the operator
$$
\mathcal K: L^2(\Theta;\mathbb R^k) \longrightarrow L^2(\Theta;\mathbb R^k)
$$
is Hilbert-Schmidt and hence compact. Therefore, for every fixed $t\in[0,T]$, the set $\left\{ \mathcal T(m)_t: m\in B_R \right\}$ is relatively compact in $L^2(\Theta;\mathbb R^k)$.

We finally establish a uniform modulus of continuity in time. Taking expectations in the forward equation in \eqref{eq:frozen_m_FBSDE}, for $0\le s\le t\le T$ we obtain
$$
\mathbb E[X_t^{m,\theta}] - \mathbb E[X_s^{m,\theta}] = \int_s^t \mathbb E\left[ B\bigl( r,X_r^{m,\theta},p_r^{m,\theta}, m_r(\theta),\theta \bigr) \right] \mathrm dr.
$$
Hence, by Jensen's inequality and the Cauchy-Schwarz inequality,
$$
\begin{aligned}
&\int_\Theta \left| \mathbb E[X_t^{m,\theta}] - \mathbb E[X_s^{m,\theta}] \right|^2 \lambda(\mathrm d\theta)
\\
&\qquad\le |t-s| \int_s^t \int_\Theta \mathbb E \left| B\bigl( r,X_r^{m,\theta},p_r^{m,\theta}, m_r(\theta),\theta \bigr) \right|^2 \lambda(\mathrm d\theta) \,\mathrm dr.
\end{aligned}
$$
By the growth condition on $B$, the definition of $B_R$, and \eqref{eq:frozen_m_apriori}, the last integral is bounded uniformly over $m\in B_R$. Thus there exists a constant $C_R>0$ such that
$$
\int_\Theta \left| \mathbb E[X_t^{m,\theta}] - \mathbb E[X_s^{m,\theta}] \right|^2 \lambda(\mathrm d\theta) \le C_R|t-s|,
$$
for all $m\in B_R$. It follows that
$$
\|\mathcal T(m)_t-\mathcal T(m)_s\|_{L^2_\lambda}^2 \le \kappa_K^2 C_R|t-s|.
$$
Hence $\mathcal T(B_R)$ is uniformly equicontinuous as a family of $L^2(\Theta;\mathbb R^k)$-valued continuous functions.

We have shown that $\mathcal T(B_R)$ is pointwise relatively compact and uniformly equicontinuous. The Banach-valued Arzel\`a-Ascoli theorem therefore implies that $\mathcal T(B_R)$ is relatively compact in $C([0,T];L^2(\Theta;\mathbb R^k))$.
\end{proof}

\begin{remark}
\label{rem:smallness_role}
The smallness condition \eqref{eq:kernel_invariance_smallness} is used only to construct an invariant ball for the fixed-point argument. Neither the well-posedness of the frozen FBSDE nor the stability estimate in Subsections \ref{subsec3.1}--\ref{subsec3.2} requires such a restriction. In particular, this condition is not a contraction condition for the consistency map.
\end{remark}

We can now establish the existence of a mean field Nash equilibrium.

\begin{theorem}
\label{thrm001}
Suppose that Assumptions \ref{asmp001}--\ref{asmp006} hold, and that the smallness condition in Assumption \ref{asmp003} is chosen so that
$$
C_2(1+T)\kappa_K^2<1,
$$
where $C_2$ is the constant in \eqref{eq:frozen_m_apriori}. Then there exists at least one mean
field Nash equilibrium.
\end{theorem}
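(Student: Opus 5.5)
The proof applies Schauder's fixed-point theorem to $\mathcal T$ on the ball $B_R$ from Lemma \ref{lem003}, and then converts the resulting self-consistent mean-field path into a measure flow $\mu^\ast$ with a family of optimal controls.

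\textbf{Step 1: a fixed point of $\mathcal T$.} The set $B_R$ is a nonempty, closed, convex subset of the Banach space $C([0,T];L^2(\Theta;\mathbb R^k))$. Indeed, it is closed because the map $m\mapsto\sup_t\|m_t\|_{L^2_\lambda}^2$ is continuous, and it is convex because it is a norm ball. By Lemma \ref{lem003}, under $C_2(1+T)\kappa_K^2<1$ the map $\mathcal T$ sends $B_R$ into itself, is continuous, and has relatively compact image. Schauder's theorem, in the form for a continuous self-map of a closed convex set with relatively compact image, therefore gives $m^\ast\in B_R$ with $\mathcal T(m^\ast)=m^\ast$.

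\textbf{Step 2: the candidate equilibrium.} Let $(X^{m^\ast,\theta},p^{m^\ast,\theta},q^{m^\ast,\theta})$ be the jointly measurable solution of \eqref{eq:frozen_m_FBSDE} with $m=m^\ast$; it exists by Remark \ref{rem:prescribed_path_well-posedness} and the measurability discussion after Lemma \ref{lem001}. Define
$$
\mu_t^{\ast,\theta}:=\mathcal L(X_t^{m^\ast,\theta}),
\qquad
u_t^{\ast,\theta}:=\hat u\bigl(t,X_t^{m^\ast,\theta},p_t^{m^\ast,\theta},m^\ast_t(\theta),\theta\bigr).
$$
I will check the following facts.
\begin{itemize}
\item[(i)] Measurability of $\theta\mapsto\mu_t^{\ast,\theta}$ follows from the joint measurability of $X$.
\item[(ii)] The square-integrability condition follows from \eqref{eq:frozen_m_apriori}.
\item[(iii)] Time continuity in $\mathbf W_{2,\lambda}$ follows from the same coupling argument that gave \eqref{eq:Phi_time_modulus}. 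That argument used only the growth of $B$, the a priori bound, and $m^\ast\in C([0,T];L^2)$.
\end{itemize}
Hence $\mu^\ast\in\mathcal M$.

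\textbf{Step 3: identify the kernel mean field and verify both equilibrium conditions.} The key identity is $m^K[\mu^\ast]=m^\ast$. Since $a_t^{\mu^\ast}(\theta)=\int x\,\mu_t^{\ast,\theta}(\mathrm dx)=\mathbb E[X_t^{m^\ast,\theta}]$, we get
$$
m_t^K[\mu^\ast]=\mathcal K a_t^{\mu^\ast}=\mathcal T(m^\ast)_t=m^\ast_t
$$
in $L^2_\lambda$ for every $t$.
\begin{itemize}
\item[(a)] The reduced FBSDE \eqref{eq:reduced_FBSDE} for $\mu^\ast$ is literally \eqref{eq:frozen_m_FBSDE} with $m=m^\ast$, for $\lambda$-a.e. $\theta$. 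By uniqueness in Lemma \ref{lem001}, $X^{\mu^\ast,\theta}=X^{m^\ast,\theta}$, so $\Phi(\mu^\ast)=\mu^\ast$.
\item[(b)] Condition (ii) of Definition \ref{def:MFG_equilibrium} then holds by construction, once we note that the state driven by $u^{\ast,\theta}$ against $m^K[\mu^\ast]$ is $X^{m^\ast,\theta}$. This is because $u^{\ast,\theta}$ substituted into $b$ reproduces $B$, and the forward SDE with Lipschitz drift in $x$ has a unique solution.
\item[(c)] Condition (i) follows from Proposition \ref{prop:frozen_optimizer} with $m=m^\ast$: $u^{\ast,\theta}$ is optimal, indeed the unique optimal control, against $m^\ast=m^K[\mu^\ast]$.
\end{itemize}

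The main obstacle is the bookkeeping around null sets. The identity $m^K[\mu^\ast]_t=m^\ast_t$ holds only as an equality in $L^2_\lambda$ for each $t$, and I need a single $\lambda$-null set outside which $m^K[\mu^\ast]_\cdot(\theta)=m^\ast_\cdot(\theta)$ as paths, so that the two frozen FBSDEs coincide for a.e. $\theta$. I would handle this in two moves. First, take a common null set over rational $t$. Second, use that both sides lie in $C([0,T];L^2)$ and that the FBSDE coefficients enter only through $\mathrm dt$-integrals and the terminal value. Concretely, equality in $L^2(\Theta\times[0,T])$ plus equality at $t=T$ for a.e. $\theta$ suffices, because the solution depends on $m(\theta)$ only through $\int_0^T|\cdot|^2\,\mathrm dt$ and $m_T(\theta)$, by the stability estimate of Lemma \ref{lem002} applied type-wise.
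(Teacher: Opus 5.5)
Your proposal is correct and follows the paper's proof essentially step for step. You apply Schauder's theorem to $\mathcal T$ on $B_R$ via Lemma \ref{lem003}, identify $m^K[\mu^\ast]=m^\ast$, use the uniqueness in Lemma \ref{lem001} to get $\Phi(\mu^\ast)=\mu^\ast$, and invoke Proposition \ref{prop:frozen_optimizer} for optimality. Your additional checks are details the paper leaves implicit, and you handle them correctly: that the state driven by $u^{\ast,\theta}$ is $X^{m^\ast,\theta}$, and the null-set/version bookkeeping via the type-wise stability estimate.
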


\begin{proof}
Let $R>0$ be chosen as in the proof of Lemma \ref{lem003}. The set $B_R$ is a nonempty closed convex subset of the Banach space $C([0,T];L^2(\Theta;\mathbb R^k))$. By Lemma \ref{lem003}, $\mathcal T:B_R\longrightarrow B_R$ is continuous and has relatively compact image. Schauder's fixed-point theorem therefore yields $m^\ast\in B_R$ such that 
\begin{equation}
\label{eq:mean_field_fixed_point}
m_t^\ast = \mathcal K \left( \theta\mapsto \mathbb E[X_t^{m^\ast,\theta}] \right),
\qquad
t\in[0,T].
\end{equation}

Let
$$
(X^{\ast,\theta},p^{\ast,\theta},q^{\ast,\theta})
:=
(X^{m^\ast,\theta},p^{m^\ast,\theta},q^{m^\ast,\theta})
$$
and define
$$
\mu_t^{\ast,\theta} := \mathcal L(X_t^{\ast,\theta}).
$$
The estimates in Lemma \ref{lem001} imply that $\mu^\ast\in\mathcal M$. Moreover, \eqref{eq:mean_field_fixed_point} gives
\begin{equation}
\label{eq:mstar_consistency}
m_t^\ast = \mathcal K \left( \theta\mapsto \int_{\mathbb R^k}x\, \mu_t^{\ast,\theta}(\mathrm dx) \right) = m_t^K[\mu^\ast].
\end{equation}
Consequently, $(X^\ast,p^\ast,q^\ast)$ solves the reduced FBSDE \eqref{eq:reduced_FBSDE} associated with the measure flow $\mu^\ast$. By the uniqueness in Lemma \ref{lem001}, its forward component coincides with the one used in the definition of $\Phi(\mu^\ast)$. Hence
\begin{equation}
\label{eq:Phi_fixed_point}
\Phi(\mu^\ast)_t^\theta = \mathcal L(X_t^{\ast,\theta}) = \mu_t^{\ast,\theta},
\end{equation}
for every $t\in[0,T]$ and $\lambda$-a.e. $\theta$.

It remains to verify that the feedback associated with this consistency fixed point is optimal. Define
$$
u_t^{\ast,\theta} = \hat u\bigl( t,X_t^{\ast,\theta},p_t^{\ast,\theta}, m_t^\ast(\theta),\theta \bigr).
$$
By \eqref{eq:mstar_consistency}, the triple $(X^{\ast,\theta},p^{\ast,\theta},q^{\ast,\theta})$ is the reduced FBSDE solution corresponding to the frozen environment $m^K[\mu^\ast]$. Proposition \ref{prop:frozen_optimizer} therefore shows that $u^{\ast,\theta}$ is the optimal admissible control for the frozen stochastic control problem for $\lambda$-a.e. $\theta$. Together with \eqref{eq:Phi_fixed_point}, this verifies both the optimality and consistency conditions in the definition of a mean field Nash equilibrium.
\end{proof}

\subsection{Uniqueness of the Nash Equilibrium}
\label{subsec3.4}

We now establish uniqueness of the mean field Nash equilibrium. The argument applies the monotonicity conditions in Assumption \ref{asmp007} directly to the equilibrium FBSDE.

\begin{theorem}
\label{thm002}
Suppose that the assumptions of Theorem \ref{thrm001} and Assumption \ref{asmp007} hold. Then the mean field Nash equilibrium is unique. More precisely, the equilibrium measure flow is unique in $\mathcal M$, and the associated optimal control is unique up to $\mathrm dt\otimes\mathrm d\mathbb P\otimes\lambda$-null sets.
\end{theorem}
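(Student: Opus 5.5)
Let $(\{u^{1,\theta}\},\mu^1)$ and $(\{u^{2,\theta}\},\mu^2)$ be two mean field Nash equilibria in the sense of Definition \ref{def:MFG_equilibrium}. The plan is an energy argument on the difference of the two equilibrium FBSDEs, with the \emph{aggregate} monotonicity of Assumption \ref{asmp007} replacing the frozen Peng-Wu condition.

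First, I reduce each equilibrium to an equilibrium FBSDE. For $i=1,2$ set $m^i:=m^K[\mu^i]\in C([0,T];L^2(\Theta;\mathbb R^k))$ and let $(X^{i,\theta},p^{i,\theta},q^{i,\theta})$ be the unique solution of \eqref{eq:frozen_m_FBSDE} with path $m^i$ (Lemma \ref{lem001} and Remark \ref{rem:prescribed_path_well-posedness}). By Proposition \ref{prop:frozen_optimizer}, the unique optimal control against $m^i$ is $\hat u(t,X^{i,\theta}_t,p^{i,\theta}_t,m^i_t(\theta),\theta)$. Hence $u^{i,\theta}$ coincides with it and the equilibrium state is $X^{i,\theta}$. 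Consistency (ii) then gives $\mu^{i,\theta}_t=\mathcal L(X^{i,\theta}_t)$, so
\[
m^i_t(\theta)=\mathcal K\bigl(\theta'\mapsto\mathbb E[X^{i,\theta'}_t]\bigr)(\theta)=\mathfrak m[X^i_t](\theta).
\]
Thus each $(X^i,p^i,q^i)$ solves the McKean--Vlasov type system in which the mean-field argument of $B$, $F$ and $\Gamma$ is $\mathfrak m[X^i_t]$. This is exactly the form in which \eqref{eq:kernel_monotonicity} and \eqref{eq:kernel_terminal_monotonicity} apply, with $X=X^1_t$, $P=p^1_t$, $\widetilde X=X^2_t$, $\widetilde P=p^2_t$. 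These are jointly measurable square-integrable families by the discussion after Lemma \ref{lem001}.

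Next comes the energy identity. With $\Delta X=X^1-X^2$ and $\Delta p=p^1-p^2$, we have $\Delta X_0=0$ and $\mathrm d\Delta X$ has no martingale part. Applying It\^o's formula to $\langle\Delta X^\theta_t,\Delta p^\theta_t\rangle$, taking expectations (the stochastic integrals are true martingales by the $\mathbb S^2_\Theta\times\mathbb H^2_\Theta$ bounds) and integrating in $\lambda$ gives
\[
\int_\Theta\mathbb E\langle\Delta X^\theta_T,\Delta p^\theta_T\rangle\lambda(\mathrm d\theta)
=\int_0^T\!\!\int_\Theta\mathbb E\bigl[\langle\Delta B^\theta_t,\Delta p^\theta_t\rangle-\langle\Delta F^\theta_t,\Delta X^\theta_t\rangle\bigr]\lambda(\mathrm d\theta)\,\mathrm dt,
\]
where the use of Fubini is justified by the square-integrability bounds. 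Assumption \ref{asmp007}(a) bounds the right side below by $c_{\mathrm M}\int_0^T\int_\Theta\mathbb E|\Delta X^\theta_t|^2\,\mathrm d\lambda\,\mathrm dt$. Since $\Delta p_T=\Gamma(X^1_T,\mathfrak m[X^1_T])-\Gamma(X^2_T,\mathfrak m[X^2_T])$, Assumption \ref{asmp007}(b) bounds the left side above by $-c_{\mathrm T}\int_\Theta\mathbb E|\Delta X^\theta_T|^2\,\mathrm d\lambda$. Combining the two bounds shows that both quantities vanish. Hence $\Delta X=0$ in $\mathrm dt\otimes\mathrm d\mathbb P\otimes\lambda$, and by continuity of paths for every $t$ and $\lambda$-a.e. $\theta$.

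Then I conclude. From $\Delta X\equiv0$ we get $\mathbb E X^{1,\theta}_t=\mathbb E X^{2,\theta}_t$, hence $m^1=m^2$. By uniqueness in Lemma \ref{lem001} for this common frozen path, $(X^1,p^1,q^1)=(X^2,p^2,q^2)$. Therefore $\mu^1_t{}^\theta=\mathcal L(X^{1,\theta}_t)=\mu^2_t{}^\theta$ for all $t$ and $\lambda$-a.e. $\theta$, i.e.\ $d(\mu^1,\mu^2)=0$. The controls $u^{i,\theta}=\hat u(t,X^{i,\theta}_t,p^{i,\theta}_t,m^i_t(\theta),\theta)$ then coincide $\mathrm dt\otimes\mathrm d\mathbb P\otimes\lambda$-a.e. (alternatively, by the uniqueness part of Proposition \ref{prop:frozen_optimizer}).

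The main obstacle is the first step. One must justify rigorously that an arbitrary equilibrium (not only the Schauder one) has state process given by the frozen FBSDE, and that the mean-field argument is exactly $\mathfrak m[X_t]$ as a family over $\theta$. This needs joint measurability in $\theta$ of the equilibrium processes, so that Assumption \ref{asmp007} can be applied pointwise in $t$. I would handle it through the uniqueness in Proposition \ref{prop:frozen_optimizer} together with the measurable-version construction following Lemma \ref{lem001}.
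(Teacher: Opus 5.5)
Your proposal is correct and follows essentially the same route as the paper. Both reduce each equilibrium to its equilibrium FBSDE via the uniqueness in Proposition~\ref{prop:frozen_optimizer}, apply It\^o's formula to $\langle\Delta X^\theta_t,\Delta p^\theta_t\rangle$ integrated over $\Theta$, and squeeze the result between Assumption~\ref{asmp007}(a) and (b); the differences are cosmetic and valid (you integrate the energy identity over $[0,T]$ at once rather than arguing through monotonicity of $D(t)$, and you finish via $m^1=m^2$ plus frozen-FBSDE uniqueness instead of the coupling bound on $\mathcal W_2$).
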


\begin{proof}
Let $\mu,\nu\in\mathcal M$ be the measure flows of two mean field Nash equilibria. By Proposition \ref{prop:frozen_optimizer}, their corresponding optimal controls are uniquely represented by the reduced FBSDEs. Denote the associated state-adjoint systems by
$$
\bigl\{ (X^\theta,p^\theta,q^\theta) \bigr\}_{\theta\in\Theta}
$$
and
$$
\bigl\{ (\widetilde X^\theta, \widetilde p^\theta, \widetilde q^\theta) \bigr\}_{\theta\in\Theta}.
$$
Both systems have the same prescribed initial random variable $\xi^\theta$ and may be realized with the same Brownian motion $W$. Thus
$$
X_0^\theta = \widetilde X_0^\theta = \xi^\theta, 
\qquad 
\lambda\text{-a.e. }\theta.
$$

Set
$$
\Delta X_t^\theta := X_t^\theta-\widetilde X_t^\theta,
\qquad
\Delta p_t^\theta := p_t^\theta-\widetilde p_t^\theta,
$$
and define
$$
D(t) := \int_\Theta \mathbb E \left[ \left\langle \Delta X_t^\theta,\Delta p_t^\theta \right\rangle \right] \lambda(\mathrm d\theta).
$$
Clearly, $D(0)=0$. 

Since $\mu$ and $\nu$ satisfy the consistency condition,
$$
m_t^K[\mu] = \mathfrak m[X_t],
\qquad
m_t^K[\nu] = \mathfrak m[\widetilde X_t].
$$
For brevity, write $m_t:=m_t^K[\mu]$, $\widetilde m_t:=m_t^K[\nu]$.
The two equilibrium systems satisfy
$$
\left\{
\begin{aligned}
\mathrm dX_t^\theta
& = B(t,X_t^\theta,p_t^\theta,m_t(\theta),\theta) \mathrm dt + \sigma\mathrm dW_t,
\\
\mathrm dp_t^\theta
& = -F(t,X_t^\theta,p_t^\theta,m_t(\theta),\theta) \mathrm dt + q_t^\theta\mathrm dW_t,
\end{aligned}
\right.
$$
and the analogous equations with $(X,p,q,m)$ replaced by $(\widetilde X,\widetilde p,\widetilde q,\widetilde m)$.

Define
$$
\begin{aligned}
\Delta B_t^\theta :={}
& B(t,X_t^\theta,p_t^\theta,m_t(\theta),\theta) - B(t,\widetilde X_t^\theta,\widetilde p_t^\theta, \widetilde m_t(\theta),\theta),
\\
\Delta F_t^\theta :={}
& F(t,X_t^\theta,p_t^\theta,m_t(\theta),\theta) - F(t,\widetilde X_t^\theta,\widetilde p_t^\theta, \widetilde m_t(\theta),\theta).
\end{aligned}
$$
Since the diffusion coefficient is the same constant matrix in both
forward equations, $\mathrm d\Delta X_t^\theta = \Delta B_t^\theta\,\mathrm dt$. Applying It\^o's formula to $\langle\Delta X_t^\theta,\Delta p_t^\theta\rangle$, taking expectations, and then integrating over $\Theta$, we obtain
$$
D(t) = \int_0^t \int_\Theta \mathbb E \left[ \left\langle \Delta B_s^\theta,\Delta p_s^\theta \right\rangle - \left\langle \Delta F_s^\theta,\Delta X_s^\theta \right\rangle \right] \lambda(\mathrm d\theta)\,\mathrm ds.
$$
Indeed, the stochastic integral has zero expectation, while the integrand on the right-hand side belongs to $L^1(0,T)$ by the Lipschitz and linear-growth bounds in Assumption~\ref{asmp006}(a), together with the square-integrability of the two equilibrium state-adjoint systems. Hence $D$ is absolutely continuous on $[0,T]$, and therefore
$$
D'(t) = \int_\Theta \mathbb E \left[ \left\langle \Delta B_t^\theta,\Delta p_t^\theta \right\rangle - \left\langle \Delta F_t^\theta,\Delta X_t^\theta \right\rangle \right] \lambda(\mathrm d\theta)
$$
for a.e. $t\in[0,T]$. By \eqref{eq:kernel_monotonicity},
\begin{equation}
\label{eq:Dprime_positive}
D'(t) \ge c_{\mathrm M} \int_\Theta \mathbb E|\Delta X_t^\theta|^2 \lambda(\mathrm d\theta) \ge 0.
\end{equation}
Hence $D$ is nondecreasing and, since $D(0)=0$, $D(T)\ge0$.

At the terminal time,
$$
p_T^\theta = \Gamma(X_T^\theta,m_T(\theta),\theta),
$$
and
$$
\widetilde p_T^\theta = \Gamma(\widetilde X_T^\theta, \widetilde m_T(\theta),\theta).
$$
Consequently,
$$
\begin{aligned} 
D(T) = \int_\Theta \mathbb E \Big[ \big\langle & \Gamma(X_T^\theta,m_T(\theta),\theta) - \Gamma(\widetilde X_T^\theta, \widetilde m_T(\theta),\theta),
\\
& X_T^\theta-\widetilde X_T^\theta \big\rangle \Big] \lambda(\mathrm d\theta).
\end{aligned}
$$
By \eqref{eq:kernel_terminal_monotonicity},
$$
D(T) \le -c_{\mathrm T} \int_\Theta \mathbb E |\Delta X_T^\theta|^2 \lambda(\mathrm d\theta) \le 0.
$$
Thus $D(T)=0$.

Since $D$ is nondecreasing and $D(0)=D(T)=0$, we have $D(t)=0$, $t\in[0,T]$. Hence $D'(t)=0$ for a.e. $t$. Combining this with
\eqref{eq:Dprime_positive} yields that for a.e. $t\in[0,T]$,
$$
\int_\Theta \mathbb E |\Delta X_t^\theta|^2 \lambda(\mathrm d\theta) = 0.
$$

The map
$$
t\longmapsto \int_\Theta \mathbb E |\Delta X_t^\theta|^2 \lambda(\mathrm d\theta)
$$
is continuous. Therefore the preceding equality holds for every $t\in[0,T]$. Using the joint realization as a coupling,
$$
\begin{aligned}
\int_\Theta \mathcal W_2 \left( \mu_t^\theta,\nu_t^\theta \right)^2 \lambda(\mathrm d\theta)
&\le \int_\Theta \mathbb E |\Delta X_t^\theta|^2 \lambda(\mathrm d\theta)
\\
&=0.
\end{aligned}
$$
Thus $d(\mu,\nu)=0$, and hence $\mu=\nu$ in $\mathcal M$.

Finally, once the equilibrium measure flow is fixed, Proposition \ref{prop:frozen_optimizer} gives a unique optimal control for $\lambda$-a.e. type $\theta$. Therefore the two equilibrium controls also coincide $\mathrm dt\otimes\mathrm d\mathbb P\otimes\lambda$-a.e. This proves uniqueness of the mean field Nash equilibrium.
\end{proof}

\begin{remark}
The smallness condition used for existence and the monotonicity condition used for uniqueness play different roles. In particular, the existence proof only requires the kernel interaction to be weak enough to preserve an invariant ball for the consistency map, whereas Assumption \ref{asmp007} requires the intrinsic monotonicity of the state-adjoint system to dominate the interaction also in the comparison of two self-consistent equilibria.

Alternatively, a smallness condition satisfying $C_\Phi\kappa_K<1$ in Proposition \ref{prop:Phi_stability} would make the consistency map a contraction and yield uniqueness directly. Assumption \ref{asmp007} is retained here because it permits uniqueness beyond that contraction regime.
\end{remark}

\section{An Explicit Kernel-Based Mean Field Game on a Continuum Type Space}
\label{sec4}

In this section, we consider a linear-quadratic instance of the kernel-based heterogeneous mean field game introduced in Section \ref{sec2}. In addition to heterogeneity in the interaction structure, we explicitly incorporate heterogeneity in individual preferences. More precisely, the type variable determines both how an agent is connected to the population through the interaction kernel and how strongly the agent penalizes state risk through a scalar risk-aversion profile.

The example serves two purposes. First, it provides a directly verifiable model satisfying the assumptions used in Sections \ref{sec2}--\ref{sec3}. The example yields a spectral smallness condition for existence and an explicit sufficient condition for uniqueness. Second, when the interaction kernel has finite rank, its eigenfunctions identify the finitely many collective patterns transmitted directly through the mean field. In contrast to the homogeneous-preference case, however, a nonconstant risk-aversion profile generally couples these kernel modes. This provides a simple illustration of the distinction between heterogeneity in interactions and heterogeneity in individual preferences.

\subsection{Model Formulation and Equilibrium Characterization}
\label{subsec4.1}

Let $\Theta=[0,1]$ be equipped with the Lebesgue probability measure, and let $U=\mathbb R^2$. For each $\theta\in[0,1]$, consider a two-dimensional state process $X_t^\theta\in\mathbb R^2$ and a two-dimensional control $u_t^\theta\in\mathbb R^2$.

The controlled state equation is
\begin{equation}
\label{eq:LQ_state}
\mathrm dX_t^\theta = \left( AX_t^\theta+Du_t^\theta+Cm_t(\theta) \right)\mathrm dt + \Sigma\,\mathrm dW_t,
\qquad
X_0^\theta=\xi^\theta,
\end{equation}
where $A,C,D,\Sigma\in\mathbb R^{2\times2}$ are constant matrices and $\Sigma$ is nondegenerate. We assume that $(\theta,\omega)\mapsto\xi^\theta(\omega)$ is jointly measurable, $\xi^\theta$ is $\mathcal F_0$-measurable for a.e. $\theta$ and
$$
\int_0^1 \mathbb E|\xi^\theta|^2\,\mathrm d\theta <\infty.
$$

The kernel-based mean field is
\begin{equation}
\label{eq:LQ_mean_field}
m_t(\theta) = \int_0^1 K(\theta,\theta')\bar X_t(\theta')\,\mathrm d\theta',
\qquad
\bar X_t(\theta) := \mathbb E[X_t^\theta].
\end{equation}
Here $K:[0,1]^2\to\mathbb R$ is a scalar kernel. In relation to the general matrix-valued framework of Section \ref{sec2}, we identify $K$ with $KI_2$ and let the associated integral operator act componentwise on $L^2([0,1];\mathbb R^2)$. Its operator norm is therefore equal to that of the scalar operator on $L^2([0,1])$.

We assume that $K\in L^2([0,1]^2)$ and $K(\theta,\theta')=K(\theta',\theta)$ a.e., and write
$$
\kappa_K := \|\mathcal K\|_{\mathcal L(L^2([0,1]))}.
$$
In accordance with Assumption \ref{asmp003}, we impose
\begin{equation}
\label{eq:LQ_kernel_smallness}
\kappa_K<\bar\kappa,
\end{equation}
where $\bar\kappa$ is the threshold appearing in \eqref{eq:kernel_smallness}.

To model heterogeneity in individual preferences, let $\gamma:[0,1]\to(0,\infty)$ be a measurable function satisfying
\begin{equation}
\label{eq:risk_aversion_bounds}
0<\underline\gamma
\le
\gamma(\theta)
\le
\overline\gamma
<\infty
\qquad
\text{for a.e. }\theta\in[0,1].
\end{equation}
We interpret $\gamma(\theta)$ as a reduced-form scalar risk-aversion intensity: larger values correspond to a stronger penalization of state risk. The use of a scalar parameter to order individual attitudes toward risk is consistent with the classical treatment of risk aversion through scalar measures of utility curvature; see, for example, \cite{pratt1978risk}. Here $\gamma(\theta)$ should not be identified literally with an Arrow-Pratt coefficient. Rather, it scales the quadratic state penalty in the present linear-quadratic formulation.

Let $Q_0,G_0\in\mathbb R^{2\times2}$ be symmetric positive-definite matrices and suppose that
\begin{equation}
\label{eq:LQ_base_coercivity}
Q_0\succeq q_0I_2,
\qquad
G_0\succeq g_0I_2
\end{equation}
for some $q_0,g_0>0$. We set
$$
Q(\theta):=\gamma(\theta)Q_0,
\qquad
G(\theta):=\gamma(\theta)G_0.
$$
Thus $Q_0$ and $G_0$ determine the common structure of the state penalty across the population, whereas $\gamma(\theta)$ determines how strongly a type-$\theta$ agent penalizes this risk.

The representative agent maximizes
\begin{equation}
\label{eq:LQ_reward}
\begin{aligned}
J^\theta(u^\theta) = \mathbb E\Bigg[ \int_0^T \Big(
& -\frac12 \gamma(\theta) (X_t^\theta)^\top Q_0X_t^\theta -\frac12 (u_t^\theta)^\top Ru_t^\theta
\\
& +(X_t^\theta)^\top Sm_t(\theta) \Big)\,\mathrm dt - \frac12 \gamma(\theta) (X_T^\theta)^\top G_0X_T^\theta \Bigg],
\end{aligned}
\end{equation}
where $R\succ0$ and $S\in\mathbb R^{2\times2}$ is a fixed interaction matrix. Notice that the type variable enters the model in two different ways. The kernel $K(\theta,\theta')$ determines the interaction pattern between types, whereas $\gamma(\theta)$ describes heterogeneity in individual preferences.

For a prescribed mean-field path $m$, the Hamiltonian is
$$
\begin{aligned}
H(x,u,p;\theta,m) ={}
& p^\top(Ax+Du+Cm) -\frac12\gamma(\theta)x^\top Q_0x -\frac12u^\top Ru +x^\top Sm.
\end{aligned}
$$
Since $R\succ0$, the mapping $u\mapsto H(x,u,p;\theta,m)$ is strictly concave and tends to $-\infty$ as $|u|\to\infty$. Its unique maximizer is determined by $D^\top p-Ru=0$, and hence
\begin{equation}
\label{eq:LQ_feedback}
\hat u(x,p,m,\theta) = R^{-1}D^\top p.
\end{equation}
In particular, the maximizer is jointly measurable and satisfies the linear-growth condition in Assumption \ref{asmp005}. Although the feedback map itself does not depend explicitly on $\theta$, the equilibrium adjoint process $p^\theta$ does, through the type-dependent state and terminal penalties.

Set
\begin{equation}
\label{eq:LQ_M}
M:=DR^{-1}D^\top.
\end{equation}
The reduced Hamiltonian is
\begin{equation}
\label{eq:LQ_reduced_H}
\begin{aligned}
\widehat H(x,p;\theta,m) ={}
& p^\top(Ax+Cm) +\frac12p^\top Mp -\frac12\gamma(\theta)x^\top Q_0x +x^\top Sm.
\end{aligned}
\end{equation}
Consequently,
$$
\partial_p\widehat H(x,p;\theta,m) = Ax+Mp+Cm,
$$
and
$$
\partial_x\widehat H(x,p;\theta,m) = A^\top p-\gamma(\theta)Q_0x+Sm.
$$
Thus the reduced coefficients introduced in Subsection \ref{subsec2.2} are
\begin{equation}
\label{eq:LQ_reduced_coefficients}
\begin{aligned}
B_{\rm red}(x,p,m,\theta)
&= Ax+Mp+Cm,
\\
F_{\rm red}(x,p,m,\theta)
&= A^\top p-\gamma(\theta)Q_0x+Sm,
\\
\Gamma(x,m,\theta)
&= -\gamma(\theta)G_0x.
\end{aligned}
\end{equation}

The boundedness of $\gamma$ in \eqref{eq:risk_aversion_bounds} implies that these coefficients satisfy the uniform Lipschitz and growth conditions in Assumption \ref{asmp006}(a). We next verify the monotonicity condition. For a fixed $m\in\mathbb R^2$,
$$
\begin{aligned}
& \left\langle B_{\rm red}(x,p,m,\theta) - B_{\rm red}(x',p',m,\theta), p-p' \right\rangle
\\
&\quad - \left\langle F_{\rm red}(x,p,m,\theta) - F_{\rm red}(x',p',m,\theta), x-x' \right\rangle
\\
&= (p-p')^\top M(p-p') + \gamma(\theta) (x-x')^\top Q_0(x-x').
\end{aligned}
$$
Since $M\succeq0$, \eqref{eq:risk_aversion_bounds} and \eqref{eq:LQ_base_coercivity} imply
$$
\begin{aligned}
& \left\langle B_{\rm red}(x,p,m,\theta) - B_{\rm red}(x',p',m,\theta), p-p' \right\rangle
\\
&\quad - \left\langle F_{\rm red}(x,p,m,\theta) - F_{\rm red}(x',p',m,\theta), x-x' \right\rangle
\\
&\ge \underline\gamma q_0|x-x'|^2.
\end{aligned}
$$
Moreover,
$$
\begin{aligned}
& \left\langle \Gamma(x,m,\theta)-\Gamma(x',m,\theta), x-x' \right\rangle
\\
&= -\gamma(\theta) (x-x')^\top G_0(x-x') \le -\underline\gamma g_0|x-x'|^2.
\end{aligned}
$$
Hence Assumption \ref{asmp006}(b) holds with, for instance,
$$
\beta=\underline\gamma q_0,
\qquad
\nu=\underline\gamma g_0.
$$

The equilibrium FBSDE therefore takes the form
\begin{equation}
\label{eq:LQ_equilibrium_FBSDE}
\left\{
\begin{aligned}
\mathrm dX_t^\theta ={}
& \left( AX_t^\theta+Mp_t^\theta+Cm_t(\theta) \right)\mathrm dt + \Sigma\,\mathrm dW_t,
\\
\mathrm dp_t^\theta ={}
& - \left( A^\top p_t^\theta - \gamma(\theta)Q_0X_t^\theta + Sm_t(\theta) \right)\mathrm dt + q_t^\theta\,\mathrm dW_t,
\\
X_0^\theta ={}
& \xi^\theta,
\\
p_T^\theta ={}
& -\gamma(\theta)G_0X_T^\theta,
\\
m_t(\theta) ={}
& \displaystyle \int_0^1 K(\theta,\theta') \mathbb E[X_t^{\theta'}]\,\mathrm d\theta'.
\end{aligned}
\right.
\end{equation}

The preceding conditions verify Assumptions \ref{asmp001}--\ref{asmp006}. In particular, if \eqref{eq:LQ_kernel_smallness} holds, Theorem \ref{thrm001} yields the existence of a mean field Nash equilibrium.

We next give an explicit sufficient condition for uniqueness. Suppose in addition that
\begin{equation}
\label{eq:LQ_M_coercive}
M\succeq r_0I_2
\end{equation}
for some $r_0>0$. This condition holds, for example, when $D$ is nonsingular.

Let $(X,p)$ and $(\widetilde X,\widetilde p)$ be two type-indexed square-integrable state-adjoint families, write
$$
\Delta X^\theta = X^\theta-\widetilde X^\theta,
\qquad
\Delta p^\theta = p^\theta-\widetilde p^\theta,
$$
$$
\begin{aligned}
\Delta B_{\mathrm{red}}^\theta
&:= B_{\mathrm{red}}(X^\theta,p^\theta,m[X](\theta),\theta) -B_{\mathrm{red}}(\widetilde X^\theta,\widetilde p^\theta,m[\widetilde X](\theta),\theta),
\\
\Delta F_{\mathrm{red}}^\theta
&:= F_{\mathrm{red}}(X^\theta,p^\theta,m[X](\theta),\theta) -F_{\mathrm{red}}(\widetilde X^\theta,\widetilde p^\theta,m[\widetilde X](\theta),\theta).
\end{aligned}
$$
and
$$
\Delta m = \mathfrak m[X]-\mathfrak m[\widetilde X].
$$
Then
$$
\begin{aligned}
& \left\langle \Delta B_{\rm red}^\theta,\Delta p^\theta \right\rangle - \left\langle \Delta F_{\rm red}^\theta,\Delta X^\theta \right\rangle
\\
&= (\Delta p^\theta)^\top M\Delta p^\theta + \gamma(\theta) (\Delta X^\theta)^\top Q_0\Delta X^\theta
\\
&\quad + \left\langle C\Delta m(\theta),\Delta p^\theta \right\rangle - \left\langle S\Delta m(\theta),\Delta X^\theta \right\rangle.
\end{aligned}
$$
Furthermore,
$$
\|\Delta m\|_{L^2} \le \kappa_K \left( \int_0^1 \mathbb E|\Delta X^\theta|^2\,\mathrm d\theta \right)^{1/2}.
$$
Using \eqref{eq:LQ_M_coercive}, Jensen's inequality, and Young's inequality, we obtain
$$
\begin{aligned}
& \int_0^1 \mathbb E \left[ \left\langle \Delta B_{\rm red}^\theta,\Delta p^\theta \right\rangle - \left\langle \Delta F_{\rm red}^\theta,\Delta X^\theta \right\rangle \right] \mathrm d\theta
\\
&\ge \left( \underline\gamma q_0 - \|S\|\kappa_K - \frac{\|C\|^2\kappa_K^2}{4r_0} \right) \int_0^1 \mathbb E|\Delta X^\theta|^2\,\mathrm d\theta.
\end{aligned}
$$
Hence Assumption \ref{asmp007}(a) holds whenever
\begin{equation}
\label{eq:LQ_uniqueness_condition}
\underline\gamma q_0 > \|S\|\kappa_K + \frac{\|C\|^2\kappa_K^2}{4r_0}.
\end{equation}

For the terminal condition,
$$
\begin{aligned}
& \int_0^1 \mathbb E \left[ \left\langle -\gamma(\theta)G_0X^\theta + \gamma(\theta)G_0\widetilde X^\theta, X^\theta-\widetilde X^\theta \right\rangle \right] \mathrm d\theta
\\
&\le -\underline\gamma g_0 \int_0^1 \mathbb E |X^\theta-\widetilde X^\theta|^2 \,\mathrm d\theta.
\end{aligned}
$$
Therefore Assumption \ref{asmp007}(b) holds with
$$
c_{\mathrm T} = \underline\gamma g_0.
$$

We have thus obtained the following direct consequence of the general theory.

\begin{proposition}
\label{prop:LQ_existence_uniqueness}
Suppose that \eqref{eq:risk_aversion_bounds} and \eqref{eq:LQ_base_coercivity} hold, $R\succ0$, and \eqref{eq:LQ_kernel_smallness} is satisfied. Then the linear-quadratic game \eqref{eq:LQ_state}--\eqref{eq:LQ_reward} admits at least one mean field Nash equilibrium.

If, in addition, \eqref{eq:LQ_M_coercive} and \eqref{eq:LQ_uniqueness_condition} hold, then the equilibrium is unique.
\end{proposition}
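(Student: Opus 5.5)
The proposition follows from Theorem~\ref{thrm001} and Theorem~\ref{thm002} once the assumptions of Section~\ref{sec2} have been checked for the linear-quadratic data. We organize the argument as that verification.

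For existence, we go through Assumptions~\ref{asmp001}--\ref{asmp006} in turn.

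\emph{Assumption~\ref{asmp001}.} This is the stated integrability of $\xi^\theta$.

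\emph{Assumption~\ref{asmp002}.} The coefficients $b(t,x,u,m,\theta)=Ax+Du+Cm$ and $\sigma=\Sigma$ are affine with constant $x$-derivative. The rewards $f$ and $g$ are quadratic, and their $x$-gradients $-\gamma(\theta)Q_0x+Sm$ and $-\gamma(\theta)G_0x$ grow linearly. Boundedness of $\gamma$ in \eqref{eq:risk_aversion_bounds} makes every constant uniform in $\theta$, and measurability of $\gamma$ gives the measurability requirements.

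\emph{Assumption~\ref{asmp004}.} Strict concavity and coercivity in $u$ follow from $R\succ0$. The maximizer \eqref{eq:LQ_feedback} is explicit, hence measurable.

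\emph{Assumption~\ref{asmp005}.} The linear growth in (a) holds with $C_u=\|R^{-1}D^\top\|$. For (b), $\widehat H$ in \eqref{eq:LQ_reduced_H} is a polynomial, and the envelope identities can be checked by direct differentiation, since $\partial_uH=0$ at $\hat u$ and $U=\mathbb R^2$.

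\emph{Assumption~\ref{asmp006}.} Part (a) follows from the affine form of \eqref{eq:LQ_reduced_coefficients}, with $L_{xp}$ built from $\|A\|,\|M\|,\overline\gamma\|Q_0\|$, $L_m=\|C\|+\|S\|$, $L_\Gamma=\overline\gamma\|G_0\|$, $L_{\Gamma,m}=0$ and $C_0=0$. Part (b) is the computation above, giving $\beta=\underline\gamma q_0$ and $\nu=\underline\gamma g_0$.

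\emph{Assumption~\ref{asmp003}.} The scalar kernel is identified with $KI_2$, which acts componentwise and has the same operator norm. Condition \eqref{eq:LQ_kernel_smallness}, with $\bar\kappa$ chosen as $(C_2(1+T))^{-1/2}$, is then exactly the hypothesis of Theorem~\ref{thrm001}, which yields existence.

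For uniqueness, we verify Assumption~\ref{asmp007} and apply Theorem~\ref{thm002}. The key step is the integrated lower bound on the monotonicity functional. Its cross terms are bounded as follows:
\[
\int_0^1\mathbb E\langle C\Delta m(\theta),\Delta p^\theta\rangle\,\mathrm d\theta\ge -r_0\|\Delta p\|^2-\frac{\|C\|^2}{4r_0}\|\Delta m\|_{L^2}^2,
\]
and
\[
\Bigl|\int_0^1\mathbb E\langle S\Delta m(\theta),\Delta X^\theta\rangle\,\mathrm d\theta\Bigr|\le\|S\|\,\|\Delta m\|_{L^2}\|\Delta X\|.
\]
Here $\|\cdot\|$ denotes the $L^2(\mathrm d\theta\otimes\mathbb P)$ norm. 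Since $\Delta m$ is deterministic, $\mathbb E\langle C\Delta m,\Delta p\rangle=\langle C\Delta m,\mathbb E\Delta p\rangle$; the Young bound absorbs this term into $\mathbb E(\Delta p)^\top M\Delta p\ge r_0\|\Delta p\|^2$, with Jensen's inequality handling the expectation. Combining these with $\|\Delta m\|_{L^2}\le\kappa_K\|\Delta X\|$ and $\gamma(\theta)Q_0\succeq\underline\gamma q_0I_2$ gives Assumption~\ref{asmp007}(a), with
\[
c_{\mathrm M}=\underline\gamma q_0-\|S\|\kappa_K-\frac{\|C\|^2\kappa_K^2}{4r_0},
\]
which is positive exactly under \eqref{eq:LQ_uniqueness_condition}. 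The terminal part (b) holds with $c_{\mathrm T}=\underline\gamma g_0$ because $\Gamma$ does not depend on $m$. This is the special case $\beta_p=r_0$, $L_m\to\|C\|,\|S\|$ of Remark~\ref{rem:kernel_monotonicity_verification}.

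The only step needing care is this Young/absorption inequality and the matching of constants: the $C$-term must be absorbed by the $M$-coercivity in $p$, while the $S$-term must be charged against the $x$-monotonicity. Everything else is bookkeeping.
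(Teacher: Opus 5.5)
Your proposal is correct and follows the paper's own route. You check Assumptions~\ref{asmp001}--\ref{asmp006} for the LQ data (with $\beta=\underline\gamma q_0$, $\nu=\underline\gamma g_0$) to invoke Theorem~\ref{thrm001}; you then verify Assumption~\ref{asmp007} by absorbing the $C$-cross term into $M\succeq r_0I_2$ via Young's inequality, charging the $S$-term against $\underline\gamma q_0$ through $\|\Delta m\|_{L^2}\le\kappa_K\|\Delta X\|$, and taking $c_{\mathrm T}=\underline\gamma g_0$, before applying Theorem~\ref{thm002}.

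One cosmetic point: Assumption~\ref{asmp006}(a) asks for strictly positive constants, so take any $L_{\Gamma,m},C_0>0$ rather than $0$.
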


\subsection{Kernel Modes and Preference-Induced Mode Mixing}
\label{subsec4.2}

We next examine how the interaction kernel and the heterogeneous risk-aversion profile jointly determine the equilibrium mean dynamics.

Suppose that $K$ has finite rank and admits the representation
\begin{equation}
\label{eq:finite_rank_kernel}
K(\theta,\theta') = \sum_{n=0}^N \rho_ne_n(\theta)e_n(\theta'),
\end{equation}
where $\{e_n\}_{n=0}^N$ is an orthonormal family in $L^2([0,1])$ and $\rho_0,\ldots,\rho_N\in\mathbb R$. Then
$$
\mathcal Ke_n = \rho_ne_n,
\qquad
n=0,\ldots,N,
$$
and
\begin{equation}
\label{eq:finite_rank_operator_norm}
\kappa_K = \max_{0\le n\le N}|\rho_n|.
\end{equation}
Thus the kernel smallness condition becomes the directly verifiable spectral condition
\begin{equation}
\label{eq:spectral_existence_condition}
\max_{0\le n\le N}|\rho_n| < \bar\kappa.
\end{equation}

Extend $\{e_0,\ldots,e_N\}$ to a complete orthonormal basis $\{e_n\}_{n\ge0}$ of $L^2([0,1])$, and set $\rho_n:=0$, $n>N$. Define the equilibrium mean profiles
$$
\bar X_t(\theta) := \mathbb E[X_t^\theta],
\qquad
\bar p_t(\theta) := \mathbb E[p_t^\theta].
$$
Taking expectations in \eqref{eq:LQ_equilibrium_FBSDE} gives
\begin{equation}
\label{eq:LQ_mean_system}
\left\{
\begin{aligned}
\dot{\bar X}_t(\theta)
&= A\bar X_t(\theta) + M\bar p_t(\theta) + C\mathcal K\bar X_t(\theta),
\\
\dot{\bar p}_t(\theta)
&= -A^\top\bar p_t(\theta) + \gamma(\theta)Q_0\bar X_t(\theta) - S\mathcal K\bar X_t(\theta),
\\
\bar p_T(\theta)
&= -\gamma(\theta)G_0\bar X_T(\theta).
\end{aligned}
\right.
\end{equation}

For $n\ge0$, define
$$
x_n(t) := \int_0^1 \bar X_t(\theta)e_n(\theta)\,\mathrm d\theta,
$$
$$
p_n(t) := \int_0^1 \bar p_t(\theta)e_n(\theta)\,\mathrm d\theta,
$$
and
$$
x_{n,0} := \int_0^1 \mathbb E[\xi^\theta]e_n(\theta)\,\mathrm d\theta.
$$
Then
$$
\mathcal K\bar X_t(\theta) = \sum_{n=0}^N \rho_nx_n(t)e_n(\theta).
$$
Thus only the first $N$ modes enter the mean-field interaction directly.

The nonconstant risk-aversion profile introduces a second operator on the type space, namely the multiplication operator
$$
(\mathcal Gv)(\theta) := \gamma(\theta)v(\theta).
$$
Its matrix coefficients in the basis $\{e_n\}_{n\ge0}$ are
\begin{equation}
\label{eq:risk_mixing_coefficients}
a_{nj} := \int_0^1 \gamma(\theta)e_n(\theta)e_j(\theta)\,\mathrm d\theta,
\qquad
n,j\ge0.
\end{equation}
Since $\gamma$ is bounded, $\mathcal G$ is a bounded self-adjoint operator on $L^2([0,1])$, and the matrix $(a_{nj})_{n,j\ge0}$ represents this operator in the kernel eigenbasis.

Projecting \eqref{eq:LQ_mean_system} onto $e_n$ gives, for every $n\ge0$,
\begin{equation}
\label{eq:modal_fbsde}
\left\{
\begin{aligned}
\dot x_n(t)
&= (A+\rho_nC)x_n(t) + Mp_n(t),
\\
\dot p_n(t)
&= -A^\top p_n(t) + Q_0\sum_{j\ge0}a_{nj}x_j(t) - \rho_nSx_n(t),
\\
x_n(0)
&= x_{n,0},
\\
p_n(T)
&= -G_0\sum_{j\ge0}a_{nj}x_j(T).
\end{aligned}
\right.
\end{equation}
Here and below $\rho_n=0$ for $n>N$.

Equation \eqref{eq:modal_fbsde} separates the two sources of heterogeneity. The eigenvalues $\rho_n$ determine which collective patterns are transmitted through the population and with what strength, while the coefficients $a_{nj}$ describe how heterogeneous risk preferences mix these patterns in the adjoint dynamics. In particular, finite rank of $\mathcal K$ means that only finitely many modes enter the interaction term directly, but it does not in general make the complete mean system finite dimensional.

This distinction disappears when risk preferences are homogeneous.
Indeed, if $\gamma(\theta)\equiv\gamma_0$, then $a_{nj}=\gamma_0\delta_{nj}$, and \eqref{eq:modal_fbsde} decouples into independent modal systems:
$$
\left\{
\begin{aligned}
\dot x_n(t)
&= (A+\rho_nC)x_n(t)+Mp_n(t),
\\
\dot p_n(t)
&= -A^\top p_n(t) + (\gamma_0Q_0-\rho_nS)x_n(t),
\\
x_n(0)
&= x_{n,0},
\\
p_n(T)
&= -\gamma_0G_0x_n(T).
\end{aligned}
\right.
$$
Thus the independent modal decomposition obtained under homogeneous preferences is a special case. For a genuinely heterogeneous $\gamma$, the multiplication operator $\mathcal G$ generally does not commute with $\mathcal K$, and the off-diagonal coefficients $a_{nj}$ transmit information between different kernel modes.

The same representation gives a spectral form of the sufficient uniqueness condition. Let
$$
\rho_{\max} := \max_{0\le n\le N}|\rho_n|.
$$
Then \eqref{eq:LQ_uniqueness_condition} becomes
\begin{equation}
\label{eq:spectral_uniqueness_condition}
\underline\gamma q_0 > \|S\|\rho_{\max} + \frac{\|C\|^2\rho_{\max}^2}{4r_0}.
\end{equation}
Hence uniqueness is ensured when the uniform lower bound on the individual state penalty dominates the effective strength of the kernel-mediated interaction.

The coupled representation \eqref{eq:modal_fbsde} also suggests a natural finite-dimensional approximation. Truncating the complete basis after $L\ge N$ modes replaces the multiplication operator $\mathcal G$ by the matrix
$$
A_L^\gamma := (a_{nj})_{0\le n,j\le L}
$$
and yields a deterministic forward-backward system with modal state and adjoint vectors in $\mathbb R^{2L}$. Such a truncation preserves all $N$ directly interacting kernel modes while successively incorporating the additional mode mixing generated by heterogeneous preferences. We do not pursue approximation error estimates here.

\subsection{Financial Interpretation}
\label{subsec4.3}

We finally illustrate the preceding structure through a heterogeneous financial market interpretation.

Let the type parameter $\theta\in[0,1]$ index an investor's risk tolerance, with smaller values corresponding to more conservative investors and larger values to more aggressive investors. Consistently with this ordering, choose the decreasing risk-aversion profile
\begin{equation}
\label{eq:financial_risk_profile}
\gamma(\theta) = \gamma_{\max} - (\gamma_{\max}-\gamma_{\min})\theta,
\qquad
0<\gamma_{\min}<\gamma_{\max}.
\end{equation}
Thus
$$
\underline\gamma=\gamma_{\min},
\qquad
\overline\gamma=\gamma_{\max}.
$$
The type parameter therefore has a direct preference interpretation: investors near $\theta=0$ place a larger weight on the quadratic state penalty, whereas investors near $\theta=1$ tolerate a larger exposure to the same underlying state risk.

Write
$$
X_t^\theta =
\begin{pmatrix}
S_t^\theta
\\
H_t^\theta
\end{pmatrix},
$$
where $S_t^\theta$ represents the investor's market sentiment or subjective valuation of a representative asset, while $H_t^\theta$ denotes the corresponding position size. The control $u_t^\theta$ describes the rate at which the investor adjusts these state variables. The matrices $Q_0$ and $G_0$ determine the relative penalties attached to deviations in the two state components, while $\gamma(\theta)$ scales their overall importance for the investor.

This preference heterogeneity is distinct from the interaction heterogeneity generated by the kernel. For example, the component of the mean field corresponding to market sentiment contains a term of the form
$$
\int_0^1 K(\theta,\theta') \mathbb E[S_t^{\theta'}] \,\mathrm d\theta'.
$$
Thus $\gamma(\theta)$ describes how strongly an investor evaluates state risk, whereas $K(\theta,\theta')$ describes how the aggregate behavior of type $\theta'$ affects type $\theta$.

To obtain an explicit three-mode interaction structure, define
$$
e_0(\theta)=1,
$$
$$
e_1(\theta) = \sqrt2\cos(\pi\theta),
$$
and
\begin{equation}
\label{eq:financial_e2}
e_2(\theta) = \frac{ \sin(\pi\theta)-2/\pi }{ \sqrt{1/2-4/\pi^2} }.
\end{equation}
Then $\{e_0,e_1,e_2\}$ is an orthonormal family in
$L^2([0,1])$.

Consider the kernel of rank at most three
\begin{equation}
\label{eq:financial_kernel}
K(\theta,\theta')
={}
\kappa_0e_0(\theta)e_0(\theta')
+
\kappa_1e_1(\theta)e_1(\theta')
+
\kappa_2e_2(\theta)e_2(\theta'),
\end{equation}
where $\kappa_0,\kappa_1,\kappa_2\in\mathbb R$. The corresponding eigenvalues are
$$
\rho_0=\kappa_0,
\qquad
\rho_1=\kappa_1,
\qquad
\rho_2=\kappa_2,
$$
and hence $\kappa_K = \max_{0\le i\le2}|\kappa_i|$. The existence condition in Proposition \ref{prop:LQ_existence_uniqueness} therefore becomes
\begin{equation}
\label{eq:financial_existence_condition}
\max_{0\le i\le2}|\kappa_i| < \bar\kappa.
\end{equation}

The first mode $e_0(\theta)=1$ represents a market-wide consensus effect. It assigns the same loading to every investor type and therefore captures a common movement shared by the whole population.

The second mode $e_1(\theta) = \sqrt2\cos(\pi\theta)$ distinguishes the two ends of the risk-tolerance spectrum. It is positive for relatively conservative investors and negative for relatively aggressive investors. This mode therefore represents a polarization pattern between the two extremes of the type space.

The third mode $e_2$ 1n \eqref{eq:financial_e2} is positive around $\theta=1/2$ and negative near both endpoints. It contrasts investors with intermediate risk tolerance against investors located near the conservative and aggressive extremes. In this sense, it represents an intermediate-versus-extreme interaction pattern while remaining orthogonal to both the consensus and polarization modes.

The equilibrium mean profile may be written as
$$
\bar X_t(\theta) = x_0(t)e_0(\theta) + x_1(t)e_1(\theta) + x_2(t)e_2(\theta) + \bar X_t^\perp(\theta),
$$
where $\bar X_t^\perp\perp\operatorname{span}\{e_0,e_1,e_2\}$, and the interaction term is exactly
\begin{equation}
\label{eq:financial_mean_field_modes}
m_t(\theta) = \kappa_0x_0(t)e_0(\theta) + \kappa_1x_1(t)e_1(\theta) + \kappa_2x_2(t)e_2(\theta).
\end{equation}
Thus only these three collective patterns are transmitted directly through the mean field.

The heterogeneous preference profile \eqref{eq:financial_risk_profile}, however, mixes the modal state components in the adjoint equation. To show this explicitly, consider
$$
\Delta\gamma := \gamma_{\max}-\gamma_{\min}, \qquad \gamma_\text{avg} := \frac{\gamma_{\max}+\gamma_{\min}}{2}.
$$
For $i,j\in\{0,1,2\}$, define
$$
a_{ij} = \int_0^1 \gamma(\theta)e_i(\theta)e_j(\theta) \,\mathrm d\theta.
$$
A direct calculation gives
\begin{equation}
\label{eq:financial_preference_matrix}
\left(a_{ij}\right)_{0\le i,j\le2} =
\begin{pmatrix}
\gamma_\text{avg}
&
\dfrac{2\sqrt2\,\Delta\gamma}{\pi^2}
&
0
\\[3mm]
\dfrac{2\sqrt2\,\Delta\gamma}{\pi^2}
&
\gamma_\text{avg}
&
-\dfrac{(16-\pi^2)\Delta\gamma}
{2\pi^2\sqrt{\pi^2-8}}
\\[3mm]
0
&
-\dfrac{(16-\pi^2)\Delta\gamma}
{2\pi^2\sqrt{\pi^2-8}}
&
\gamma_\text{avg}
\end{pmatrix}.
\end{equation}
In particular, whenever $\gamma_{\max}\ne\gamma_{\min}$,  $a_{01}$ and $a_{12}$ are generally nonzero. Hence the polarization mode is coupled through individual preferences to both the consensus mode and the intermediate-versus-extreme mode. This coupling is absent when all investors have the same risk aversion, in which case $\Delta\gamma=0$ and the matrix in \eqref{eq:financial_preference_matrix} reduces to $\gamma_\text{avg} I_3$.

Equation \eqref{eq:financial_preference_matrix} describes only the projection of the multiplication operator onto the three directly interacting modes. Since multiplication by a nonconstant $\gamma(\theta)$ need not preserve $\operatorname{span}\{e_0,e_1,e_2\}$, the preference profile may also couple these modes to the orthogonal component $\bar X^\perp$. Thus the three-mode kernel determines a channel supported on three type modes of direct population interaction, while heterogeneous preferences can generate additional indirect modal coupling through the individual optimization problem.

The coefficients $\kappa_0,\kappa_1,\kappa_2$ and the profile $\gamma$ therefore play analytically distinct roles. The former determine the strength of the collective interaction patterns and satisfy
$$
\kappa_K = \max_{0\le i\le2}|\kappa_i|,
$$
whereas the latter determines the individual sensitivity to state risk and the degree of preference-induced mode mixing. In particular, under \eqref{eq:LQ_M_coercive}, a sufficient condition for uniqueness in this three-mode example is
\begin{equation}
\label{eq:financial_uniqueness_condition}
\gamma_{\min}q_0 > \|S\| \max_{0\le i\le2}|\kappa_i| + \frac{\|C\|^2}{4r_0} \left( \max_{0\le i\le2}|\kappa_i| \right)^2.
\end{equation}
This condition makes explicit the balance between individual risk penalization and collective interaction. A smaller $\gamma_{\min}$ permits some investor types to tolerate greater state risk and consequently weakens the uniform coercivity available for the uniqueness argument, whereas stronger kernel-mediated interaction increases the amount of coercivity required.

The example therefore separates two conceptually different forms of heterogeneity. The kernel specifies how different investor types are connected through the population, while the scalar profile $\gamma(\theta)$ specifies how their individual attitudes toward risk differ. Their interaction is reflected in the coupled modal system \eqref{eq:modal_fbsde}: kernel eigenvalues determine the directly transmitted collective modes, while heterogeneous preferences mix these modes through the multiplication operator generated by $\gamma$.

\section{Conclusion}\label{sec5}

In this paper, we developed a probabilistic existence and uniqueness theory for a class of heterogeneous mean field games on a continuum type space with kernel-based interactions. The interaction is represented by a Hilbert-Schmidt integral operator acting on the type-indexed mean state profile, while the individual control problem allows a general closed convex control set and a nonlinear feedback characterized by the unique maximizer of the Hamiltonian.

For a prescribed population flow, the stochastic maximum principle leads to a type-indexed family of fully coupled FBSDEs. A Peng-Wu type monotonicity condition on the reduced coefficients gives global well-posedness of this frozen system and, at the same time, provides the strong concavity needed to verify optimality of the resulting feedback. The equilibrium consistency problem can consequently be formulated as a single-valued fixed-point problem. Working with the kernel mean-field path, we used stability estimates for the frozen FBSDE, compactness of the Hilbert-Schmidt kernel operator, and a uniform time-modulus estimate to establish continuity and compactness of the consistency map. When the operator norm of the kernel is sufficiently small to preserve an invariant ball, Schauder's fixed-point theorem yields existence of a mean field Nash equilibrium.

Uniqueness was obtained from a different structural mechanism. We introduced an aggregate monotonicity condition in which the kernel mean fields are generated endogenously by the competing state profiles. An energy argument for the difference of two equilibrium FBSDEs then forces the corresponding state distributions, and subsequently the optimal controls, to coincide. This condition need not reduce to the contraction requirement $ C_\Phi\kappa_K<1 $; it therefore separates the uniqueness mechanism from the smallness condition used to construct the Schauder invariant set.

The linear-quadratic example illustrates how these abstract conditions can be checked. For finite-rank kernels, the directly interacting component is generated by finitely many kernel modes, while heterogeneous preferences induce additional coupling between these modes through a multiplication operator on the type space. The eigenvalues of the interaction operator determine both the strength of these modes and the quantitative existence and uniqueness conditions. The example therefore illustrates how a continuum of heterogeneous types may generate a low-dimensional collective interaction structure.

Several extensions appear natural. One important direction is to relax the strict-concavity and unique-maximizer assumptions on the Hamiltonian. If the individual best response is no longer unique, the present consistency map may cease to be single-valued. A natural approach is to enlarge the admissible strategies to relaxed controls, interpreted as mixed strategies, so that compactness and convexity can be recovered in a relaxed-control formulation. The equilibrium operator would then become a set-valued correspondence. An upper hemicontinuous correspondence with nonempty compact convex values on a suitable compact convex invariant domain would allow the use of the Kakutani-Fan-Glicksberg fixed-point theorem, in the spirit of the relaxed-control approach to general mean field games developed in \cite{lacker2015mfg}. Under additional convexity or purification assumptions, one may then ask whether a relaxed equilibrium can be represented by a strict control.

A second direction is to weaken or remove the smallness restriction on $\kappa_K$. The present condition enters only through the invariant-ball estimate, rather than through well-posedness of the frozen FBSDE or continuity of the response map. Sharper dissipativity estimates, continuation arguments, or alternative fixed-point principles may therefore permit stronger interactions. It would also be useful to extend the model to time-dependent or random interaction kernels and to study how perturbations of the kernel affect the equilibrium.

Another natural extension is to replace the first-moment statistic used in the present kernel aggregation by a more general functional of the type-dependent distribution. More precisely, instead of $a_t^\mu(\theta)=\int_{\mathbb R^k}x\,\mu_t^\theta(\mathrm dx)$, one may consider $a_t^\mu(\theta) = \mathcal A(\theta,\mu_t^\theta)$ for a suitable measure functional $\mathcal A$, and define
$$
m_t^{K,\mathcal A}[\mu](\theta) = \int_\Theta K(\theta,\theta') \mathcal A(\theta',\mu_t^{\theta'}) \,\lambda(\mathrm d\theta').
$$
This formulation would separate the statistic extracted from each type-dependent distribution from its aggregation across the type space, and could accommodate higher-order moments or nonlinear distributional features. Higher-order moments may require stronger moment assumptions and a different functional setting. Under suitable Wasserstein continuity and growth conditions on $\mathcal A$, part of the stability and compactness mechanism developed here is expected to persist, whereas the equilibrium monotonicity conditions and the corresponding uniqueness argument would need to be reformulated for the new statistic; under an analogous condition, the energy argument for uniqueness could still be retained.

Finally, the finite-rank analysis suggests a systematic approximation theory for general kernels. Finite-rank approximations of the interaction operator, for example by singular-value truncation, provide a natural starting point for equilibrium approximation. With heterogeneous preferences, finite rank of the interaction operator alone does not make the complete mean system finite dimensional. Quantifying the resulting approximation error would be useful both analytically and computationally. A complementary direction is to relate the present probabilistic formulation to master-equation or infinite-dimensional dynamic-programming approaches for heterogeneous kernel-based mean field games.

\backmatter

\section*{Declarations}
This work is supported by National Natural Science Foundation (NNSF) of China under Grant 2022YFA1006100.

%%===================================================%%
%% For presentation purpose, we have included        %%
%% \bigskip command. Please ignore this.             %%
%%===================================================%%

\begin{appendices}

%%=============================================%%
%% For submissions to Nature Portfolio Journals %%
%% please use the heading ``Extended Data''.   %%
%%=============================================%%

%%=============================================================%%
%% Sample for another appendix section			       %%
%%=============================================================%%

%% \section{Example of another appendix section}\label{secA2}%
%% Appendices may be used for helpful, supporting or essential material that would otherwise 
%% clutter, break up or be distracting to the text. Appendices can consist of sections, figures, 
%% tables and equations etc.

\end{appendices}

%%===========================================================================================%%
%% If you are submitting to one of the Nature Portfolio journals, using the eJP submission   %%
%% system, please include the references within the manuscript file itself. You may do this  %%
%% by copying the reference list from your .bbl file, paste it into the main manuscript .tex %%
%% file, and delete the associated \verb+\bibliography+ commands.                            %%
%%===========================================================================================%%

\bibliography{bibliography0616.bib}% common bib file
%% if required, the content of .bbl file can be included here once bbl is generated
%%\input sn-article.bbl

\end{document}